\newtheorem{theorem}{Theorem}[section]
\newtheorem{lemma}[theorem]{Lemma}
\newtheorem{corollary}[theorem]{Corollary}
\theoremstyle{definition}
\newtheorem{definition}[theorem]{Definition}
\theoremstyle{remark}
\numberwithin{equation}{section}
\begin{document}
\title[A generalized weighted Hardy-Ces\`{a}ro operator, and its commutator]{A generalized weighted Hardy-Ces\`{a}ro operator, and its commutator on weighted $L^p$ and BMO spaces}

\author{Nguyen Minh Chuong}
\address{Institute of mathematics, Vietnamese  Academy of Science and Technology,  Hanoi, Vietnam.}
\email{nmchuong@math.ac.vn}
\author{Ha Duy Hung}
\address{Hanoi National University of Education, Hanoi, Vietnam.}
\email{hunghaduy@gmail.com}

\thanks{This paper is granted by Vietnam NAFOSTED (National Foundation for Science and Technology Development).}

\subjclass[2010]{Primary Subjects: 42B25, 44A15 }

\keywords{weighted Hardy-Ces\`{a}ro operator, Hardy integral inequalities, commutator, weighted-$L^p(\omega)$, weighted-$BMO$.}
\begin{abstract}In this paper, we introduce a new weighted Hardy-Ces\`{a}ro operator defined by $U_{\psi,s}f(x)=\int\limits_0^1 f\left(s(t)\cdot x\right) \psi(t)dt$, which is associated to the parameter curve $s(t,x)=s(t)x$. Under certain conditions on $s(t)$ and on an absolutely homogeneous weight function $\omega$, we characterize the weight function $\psi$ such that $U_{\psi,s}$ is bounded on $L^p(\omega)$, $BMO(\omega)$. The corresponding operator norms are worked out too. These results extend the ones of Jie Xiao \cite{xiao}. We also give a sufficient and a necessary condition on the weight function $\psi$, which ensure the boundedness of the commutators of operator $U_{\psi,s}$ on $L^p(\omega)$ with symbols in $BMO(\omega)$. 
\end{abstract}

\maketitle

\section{Introduction}

Let $f\in L^1_{\rm loc}$, then the classical Hardy operator $U$ is defined by 
\[
Uf(x)=\frac1x\int_0^x f(t)dt.
\]
A celebrated Hardy integral inequality can be formulated as
\[
\|Uf\|_{L^p(\mathbb R)}\leq \frac p{p-1}\|f\|_{L^p(\mathbb R)},
\]
where $1<p<\infty$ and the constant $\frac p{p-1}$ is the best possible. 
\vskip12pt
The Hardy integral inequality and its variants play an important role in various branches of analysis such as approximation theory, differential equations, theory of function spaces etc. Therefore, there are various papers studying Hardy integral inequalities for operator $U$ and its generalizations. Up to now, there are two types of Hardy operator in $n-$dimension case. The first one was introduced by Faris \cite{faris} in 1976, and in 1995, M. Christ and L. Grafakos \cite{christ} gave an equivalent version of $n-$dimensional Hardy operator
\begin{equation}\label{eq2}
{\mathcal H}f(x)=\frac1{\Omega_n|x|^n}\int_{|y|<|x|}f(y)dy,
\end{equation}
where $\Omega_n=\frac{\pi^{n/2}}{\Gamma(1+n/2)}$.	

The second one appeared in 1984, by Carton-Lebrun and Fosset \cite{carton1}, in which the authors defined the weighted Hardy operator $U_\psi$ as the following. Let $\psi: [0, 1]\to[0,\infty)$ be a measurable function, and let $f$ be a measurable complex-valued function on $\mathbb R^n$. The weighted Hardy operator $U_\psi$ is defined formally by 
\begin{equation}\label{eq1}
U_\psi f(x)=\int_0^1f(tx)\psi(t)dt,\qquad x\in\mathbb R^n.
\end{equation}
\vskip12pt
Various approaches are described and some extensions are given for the first Hardy operator $U$ and its modifications. Specially, there appeared a lot of papers which have discussed the problems of characterizing the weights $(u,v)$, for which $U$ and its generalizations are of weak and strong type $(p,q)$, or are bounded between Lorentz spaces, BMO, ... (see \cite{andersen1,bradley,chuong1, chuong2, chuong3, chuong4, edmunds, muckenhoupt3, lai, reyes1, sawyer, stein1, stein2, stepanov}). 
\vskip12pt
Under certain conditions on $\psi$, Carton-Lebrun and Fosset \cite{carton1} found that $U_\psi$ is bounded from $BMO(\mathbb R^n)$ into itself. Moreover,  $U_\psi$ commutes with the Hilbert transform in the case $n=1$ and with a certain Calder\'{o}n-Zygmund singular integral operator (and thus with the Riesz transform) in the case $n\geq2$. In 2001, J. Xiao \cite{xiao} obtained that $U_\psi$ is bounded on $L^p(\mathbb R^n)$ if and only if 
\[
\mathcal A:=\int_0^1t^{-n/p}\psi(t)dt<\infty.
\]
Meanwhile, the corresponding operator norm was worked out. The result seems to be of interest as it is related closely to the Hardy integral inequality. For example, if $\psi\equiv 1$, and $n=1$, then $U_\psi$ may be reduced to the classical Hardy operator. In \cite{xiao}, J. Xiao also obtained the $BMO(\mathbb R^n)-$bounds of $U_\psi$, which sharpened and extended the main result of Carton-Lebrun and Fosset in \cite{carton1}. He considered the weighted Ces\`{a}ro operator 
\[
V_{\psi}f(x)=\int_0^1f\left(x/t\right)t^{-n}\psi(t)dt.
\]
It is known that both Hardy operator and Ces\`{a}ro operator play an important role in various fields. For example, the Calder\'{o}n maximal operator (see \cite{bennet}), which is important in interpolation theory, is the sum of the classical Hardy operator $U$ and  the classical Ces\`{a}ro operator $V$, where
\[
Vf(x)=\int_x^\infty \frac{f(y)}ydy,\quad(x>0).
\]
In fact, $V_\psi$ is Banach adjoint of $U_{\psi}$ and they are commutative and thus J. Xiao also obtains the boundedness and bounds of $V_\psi$ on $L^p$, $BMO$ and $H^1(\mathbb R^n)$-the Hardy space on $\mathbb R^n$.  
\vskip12pt
Recently, Z. W Fu, Z. G Liu, and S. Z Lu \cite{fu1} give  a sufficient and necessary condition on weight function $\psi$, which ensures the boundedness of the commutators of weighted Hardy operators $U_\psi$, with symbols in $BMO(\mathbb R^n)$ and $L^p(\mathbb R^n)$ with $1<p<\infty$. In addition, several authors have considered the boundedness and bounds of $U_\psi$ on Morrey spaces, Campanato spaces, $Q^{\alpha}_{p,q}$-type spaces, Triebel-Lizorkin-type spaces (see \cite{kuang,tang1,tang2,zhao}). We note that in \cite{samko1,samko2}, the authors showed a very interesting application of the second weighted Hardy operators. They notes that the boundedness of the Cauchy singular integral operator $S_\Gamma$ in Morrey spaces on an arbitrary Calerson curve could be reduced to the boundedness of weighted Hardy operators to be bounded in Morrey spaces. These Hardy operators are the ones of second type.
\vskip12pt
There is a connection between the above two types of Hardy operators. If $\psi(t)=1$, and $n=1$, then $U_{\psi}$ is just reduced to the classical Hardy operator $U$. If $n\geq2$, then the restriction of $\mathcal H$ on class of radial functions is $U_\psi$ with $\psi(t)=nt^{n-1}$ (see \cite{christ,zhao}). Thus, many results on the Hardy operators of second type (which seems to be more easier to study) can be changed to the first one with restriction on class of radial functions. These Hardy operators of second type also contain the class of Riemann-Liouville integral operators in case $n=1$ and $\psi(t)=\beta(1-t)^{\beta-1}$, $\beta>0$. We obverse that, the value of $U_\psi f$ at a point $x$ just depends on the weight average value of $f$ along a parameter $s(t,x)=tx$. By the analogues of singular and maximal operators associated with certain submanifolds of positive codimension  in $\mathbb R^n$, we are motivated to consider the  generalized Hardy-Ces\`{a}ro operator as follows.
 \begin{definition}Let $\psi:[0,1]\to[0,\infty)$, $s:[0,1]\to\mathbb R$  be measurable functions. We define the generalized Hardy-Ces\`{a}ro operator $U_{\psi,s}$, associated to the parameter curve $s(x,t):=s(t)x$, as
\begin{equation}\label{eq3}
U_{\psi,s}f(x)=\int_0^1f\left(s(t) x\right)\psi(t)dt,
\end{equation}
for a measurable complex valued function $f$ on $\mathbb R^n$.
\end{definition}
Let us explain why we call $U_{\psi,s}$  Hardy-Ces\`{a}ro operators. In fact, they contain both type of classical Hardy operator and Ces\`{a}ro operator. 
If $s(t)=t$, $U_{\psi,s}$ is reduced to $U_\psi$ and if $s(t)=1/t$, we replace $\psi(t)$ by $t^{-n}\psi(t)$, then $U_{\psi,s}$ is reduced to weighted Ces\`{a}ro operator 
\[
V_\psi f(x)=\int_0^1f\left(x/t\right)t^{-n}\psi(t)dt.
\]

The operator $V_\psi$ can be generalized to
\begin{equation}\label{eq3a}
V_{\psi,s}f(x)=\int_0^1f\left(s(t)x\right)|s(t)|^{n}\psi(t)dt.
\end{equation}
\vskip12pt
Let $b$ be a locally integrable function on $\mathbb R^n$. The commutators of $b$ and operators $U_{\psi,s}, V_{\psi,s}$ are respectively defined by
\begin{equation}\label{eq4}
U^b_{\psi,s}f=bU_{\psi,s}(f)-U_{\psi,s}(bf),
\end{equation}
and
\begin{equation}\label{eq5a}
V^b_{\psi,s}f(x)=bV_{\psi,s}(f)-V_{\psi,s}(bf).
\end{equation}

\vskip12pt
For convenience, let us introduce some definitions and notations. Let $\omega(x)$ be a measurable function in $\mathbb R^n$ with $\omega\geq0$. A measurable function $f$ is said to be in $L^p(\omega)$ if
\[
\|f\|_{L^p(\omega)}=\left(\int_{\mathbb R^n}|f(x)|^p\omega(x)dx\right)^{1/p}<\infty.
\]

We denote by $BMO(\omega)$ the space of all functions $f$, which are of bounded mean oscillation with weight $\omega$, that is
\begin{equation}\label{eq5b}
\|f\|_{BMO(\omega)}=\sup\limits_B\frac1{\omega(B)}\int_B|f(x)-f_{B,\omega}|\omega(x)dx<\infty,
\end{equation}
where supremum is taken over all $n-$dimensional ball $B$. Here, $\omega(B)=\int_B\omega(x)dx$, and $f_{B,\omega}$ is the mean value of $f$ on $B$ with weight $\omega$:
\[
f_{Q,\omega}=\frac1{\omega(Q)}\int_Qf(x)\omega(x)dx.
\]
The case $\omega\equiv1$ of (\ref{eq5b}) corresponds to the class of functions of bounded mean oscillation of F. John and L. Nirenberg \cite{john}. We obverse that $L^\infty(\mathbb R^n)\subset BMO(\omega)$.  It is useful to state the following well-known result:
\begin{lemma}\label{lem1a} Let $\omega$ be a weight function with doubling property, that is for some positive constant $A$, we have
\[
\omega\left(B(x,2r)\right)\leq C\omega\left(B(x,r)\right),
\]
for all $x\in \mathbb R^n$ and $r>0$. Then, for any $1<p<\infty$, there exists some positive contant $C_p$ so that
\[
\|f\|_{BMO^p(\omega)}=\left(\frac1{\omega\left(B\right)}\int_B\left|f(x)-f_{B,\omega}\right|^p\omega(x)dx\right)^{1/p}\leq C_p\|f\|_{BMO(\omega)}.
\]
\end{lemma}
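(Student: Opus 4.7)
The plan is to establish a weighted John--Nirenberg inequality, and then pass from the exponential distributional bound to the $L^p$ estimate by the layer-cake formula. Fix a ball $B_0$ and write $m_{B_0}=f_{B_0,\omega}$. The goal is
\[
\omega\bigl(\{x\in B_0:|f(x)-m_{B_0}|>\lambda\}\bigr)\le c_1\,\omega(B_0)\,\exp\!\left(-\dfrac{c_2\,\lambda}{\|f\|_{BMO(\omega)}}\right),
\]
where $c_1,c_2$ depend only on the doubling constant of $\omega$.

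The first step is a Calder\'on--Zygmund style stopping-time decomposition with respect to the measure $d\mu=\omega\,dx$. Starting from $B_0$, one runs a Vitali-type subdivision (one may work with a Whitney or dyadic cover of $B_0$ adapted to balls; the doubling property ensures bounded overlap and controls the jump in averages between a ball and its predecessor) to produce at each level $k$ a disjoint family $\{B_{k,j}\}$ of sub-balls of $B_0$ such that
\[
\alpha<\frac1{\omega(B_{k,j})}\int_{B_{k,j}}|f-m_{B_0}|\,\omega\,dx\le C\alpha,
\]
with $|f(x)-m_{B_0}|\le \alpha$ for $\omega$-a.e. $x\in B_0\setminus\bigcup_jB_{k,j}$; here $C$ is a constant from doubling (it replaces the factor $2^n$ one gets in the unweighted dyadic case). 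Choosing $\alpha$ as a fixed large multiple of $\|f\|_{BMO(\omega)}$ and using the defining property of $BMO(\omega)$ on each $B_{k,j}$, one deduces $\sum_j\omega(B_{k,j})\le\theta\,\omega(B_0)$ for some $\theta\in(0,1)$, and iterating the decomposition inside each $B_{k,j}$ (noting $|m_{B_{k,j}}-m_{B_0}|\le C'\|f\|_{BMO(\omega)}$ by doubling) yields the exponential bound above by standard geometric-series reasoning.

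The second step is routine: by the layer-cake formula,
\[
\int_{B_0}\!|f-m_{B_0}|^p\omega\,dx=p\!\int_0^\infty\!\!\lambda^{p-1}\omega\bigl(\{x\in B_0:|f(x)-m_{B_0}|>\lambda\}\bigr)\,d\lambda,
\]
and the exponential tail bound makes this integral convergent and comparable to $\omega(B_0)\,\|f\|_{BMO(\omega)}^p$ with a constant depending on $p$ and the doubling constant. Dividing by $\omega(B_0)$, taking $p$th roots, and then supremising over $B=B_0$ yields the lemma.

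The main obstacle is honestly the first step: the Calder\'on--Zygmund decomposition relative to the weighted measure. In the unweighted case one uses a dyadic grid so that the predecessor of a selected cube has average below $\alpha$ and the cube itself has average at most $2^n\alpha$. Here there is no canonical dyadic structure respecting $\omega$, so one has to replace the ``predecessor'' control by a doubling argument for $\omega$ applied to the enclosing ball; once this controlled-jump estimate is in place, the rest of the iteration, the bookkeeping of the constants $\theta$ and $C'$, and the passage to exponential decay proceed exactly as in the classical John--Nirenberg proof.
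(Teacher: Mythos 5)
The paper does not actually prove this lemma: it declares Lemmas \ref{lem1a} and \ref{lem1c} well known and refers to \cite[Chapter V]{stein2}. Your route --- a weighted John--Nirenberg inequality obtained from a Calder\'on--Zygmund stopping-time decomposition relative to the doubling measure $d\mu=\omega\,dx$, followed by the layer-cake formula --- is exactly the standard argument the citation points to, and it is correct in outline: doubling is precisely what replaces the factor $2^n$ in the predecessor control and what gives the equivalence of ball and dyadic-cube averages needed to run the decomposition. One point to tighten: your parenthetical $|m_{B_{k,j}}-m_{B_0}|\le C'\|f\|_{BMO(\omega)}$ cannot hold uniformly in the generation $k$; what doubling gives is the single-generation jump $|m_{B_{k,j}}-m_{B_{k-1,i}}|\le C'\alpha$, so the accumulated drift grows like $k\alpha$, and the exponential bound comes from playing this linear growth against the geometric decay $\mu(\bigcup_j B_{k,j})\le\theta^k\mu(B_0)$. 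With that correction the sketch is sound and matches the intended (omitted) proof.
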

\vskip12pt
The Hardy-Littlewood maximal operator $M_\omega$ with weight $\omega$ is defined by 
\[
M_\omega f(x)=\sup\limits_B\frac1{\omega(B)}\int_B|f(y)|\omega(y)dy.
\]

There is a simple variant of the maximal theorem for weight as the following
\begin{lemma}\label{lem1c} Let $\omega$ has doubling property, then there exists a positive constant $C_r$ so that 
\[
\int_{\mathbb R^n}\left(M_\omega f(x)\right)^r\omega (x)dx\leq C_r\int_{\mathbb R^n}|f(x)|^r\omega(x)dx
\]
for any $1<r<\infty$, $f\in L^r(\omega)$.

\end{lemma}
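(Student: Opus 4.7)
The plan is to recognize $M_\omega$ as essentially the Hardy--Littlewood maximal operator taken with respect to the Borel measure $d\mu=\omega\,dx$, and then to run the classical Calder\'on--Zygmund argument, which only needs the doubling hypothesis on $\mu$ and a Vitali-type covering lemma for Euclidean balls. The two endpoints I would establish are a weak-type $(1,1)$ inequality with respect to $\omega$ and the trivial strong-type $(\infty,\infty)$ bound $\|M_\omega f\|_{L^\infty}\le\|f\|_{L^\infty}$; Marcinkiewicz interpolation then delivers the claimed $L^r(\omega)$ bound for every $1<r<\infty$.

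For the weak-$(1,1)$ step, fix $\lambda>0$ and set $E_\lambda=\{x:M_\omega f(x)>\lambda\}$. For each $x\in E_\lambda$ pick a ball $B_x\ni x$ with
\[
\int_{B_x}|f(y)|\omega(y)\,dy>\lambda\,\omega(B_x).
\]
The Vitali covering lemma yields a countable disjoint subfamily $\{B_j\}$ with $E_\lambda\subset\bigcup_j 5B_j$. Iterating the doubling hypothesis a fixed number of times gives $\omega(5B_j)\le C\omega(B_j)$, so
\[
\omega(E_\lambda)\le\sum_j\omega(5B_j)\le C\sum_j\omega(B_j)\le\frac{C}{\lambda}\sum_j\int_{B_j}|f|\omega\le\frac{C}{\lambda}\int_{\mathbb{R}^n}|f(y)|\omega(y)\,dy,
\]
where disjointness of the $B_j$ is used in the last step.

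Combining this weak-$(1,1)$ bound with the immediate estimate $M_\omega\colon L^\infty\to L^\infty$ of norm $\le 1$, Marcinkiewicz interpolation (applied in the measure space $(\mathbb{R}^n,\omega\,dx)$) gives the strong-type inequality
\[
\int_{\mathbb{R}^n}(M_\omega f)^r\,\omega\,dx\le C_r\int_{\mathbb{R}^n}|f|^r\,\omega\,dx
\]
for every $1<r<\infty$, with a constant $C_r$ depending only on $r$, $n$ and the doubling constant of $\omega$. I do not anticipate any real obstacle, since the doubling property is tailor-made to make the Vitali argument go through; the only care needed is bookkeeping the number of doublings required to absorb the factor $5$ from the covering lemma into the doubling constant.
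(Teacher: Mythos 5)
Your argument is correct and is exactly the standard proof; the paper itself omits the proof of this lemma, referring to \cite[Chapter V]{stein2}, where the same covering--weak-type--interpolation scheme is carried out for doubling measures. The one point to make explicit is that the Vitali $5r$-covering lemma requires the selected balls to have uniformly bounded radii, which you can secure either by first bounding $\omega\left(E_\lambda\cap B(0,R)\right)$ with a finite subcover and letting $R\to\infty$, or by observing that $\omega(B_x)\le\lambda^{-1}\|f\|_{L^1(\omega)}$ forces the radii to stay bounded, since a nonzero doubling measure on $\mathbb R^n$ satisfies a reverse doubling inequality.
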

Lemmas \ref{lem1a} and \ref{lem1c} are well-known (see \cite[Chapter V]{stein2}), so we omit their proofs.
\vskip12pt
It is our goal in this paper to study weighted norm inequalities for weighted Hardy-Ces\`{a}ro operator $U_{\psi,s}$ and its commutator $U^b_{\psi,s}$.  More precisely, we obtain some sufficient conditions on weight function $\omega$, and on function $s(t)$ for which we give the sufficient and necessary conditions on $\psi$ so that  $U_{\psi,s}$ is bounded on $L^p(\omega)$ and $BMO(\omega)$. It is interesting to know that, in some case of $s(t)$, the $BMO-$bound of $U_{\psi,s}$ depends on the dimension $n$. We also find the characterizations on $\psi(t)$ so that, under certain conditions on $\omega(x)$ and $s(t)$, then the commutator $U^b_{\psi,s}$ of the generalized Hardy-Ces\`{a}ro operator $U_{\psi,s}$ is bounded on $BMO(\omega)$ with symbol $b\in BMO(\omega)$. These results actually are more general than those obtained in \cite{xiao}, \cite{fu1}.

\section{Bounds of Hardy-Ces\`{a}ro operator on weight $L^p$ and weight BMO spaces}
The purpose of this section is to prove the boundedness of the generalized Hardy-Ces\`{a}ro operator $U_{\psi,s}$ on $L^p(\omega)$ and $BMO(\omega)$ with certain conditions on $\omega$. The method involve techniques similar to those used in \cite{xiao}, but with a more careful analysis for the boundedness on $BMO(\omega)$. We also work out the operator norms on such weight spaces. These results extend the results in \cite{xiao}.
\vskip12pt
 It is conventional that $\int_{S_n}\omega(x)d\sigma(x)$ refers to $2\omega(1)$ in case $n=1$. Then, the class of weight functions $\omega$, which we shall consider, is the following.
\begin{definition}
Let $\alpha$ be a real number. Let $\mathcal W_{\alpha}$ be the set of all functions $\omega$ on $\mathbb R^n$, which are measurable, $\omega(x)>0$ for almost everywhere $x\in\mathbb R^n$, $0<\int_{S_n}\omega(y)d\sigma(y)<\infty$, and are absolutely homogeneous of degree $\alpha$, that is $\omega(tx)=|t|^\alpha\omega(x)$, for all $t\in\mathbb R\setminus\{0\}$, $x\in\mathbb R^n$, where $S_n=\{x\in\mathbb R^n:\;|x|=1\}$.
\end{definition}
\vskip12pt
We remind that, if we define the measure $\rho$ on $(0,\infty)$ by $\rho(E)=\int_E r^{n-1}dr$, and the map $\Phi(x)=\left(|x|,\frac x{|x|}\right)$, then there exists a unique Borel measure $\sigma$ on $S_n$ such that $\rho\times\sigma$ is the Borel measure induced by $\Phi$ from Lebesgue measure on $\mathbb R^n$ ($n>1$). (see \cite[page 78]{folland}, \cite[page 142]{krantz} for more details). \vskip12pt
Let us describe some typical examples and properties of $\mathcal W_\alpha$. Note that, a weight $\omega\in\mathcal W_\alpha$ may not need to belong $L^1_{\rm loc}(\mathbb R^n)$. In fact, we observe that if $\omega\in \mathcal W_\alpha$, then $\omega\in L^1_{\rm loc}(\mathbb R^n)$ if and only if $\alpha>-n$. 
\vskip12pt
 If $n=1$, then $\omega(x)= c|x|^\alpha$, for some positive constant $c$. For $n\geq1$ and $\alpha\neq0$, $\omega(x)=|x|^\alpha$ is in $\mathcal W_\alpha$. If $\omega_1,\omega_2$ is in $\mathcal W_\alpha$, so is $\theta\omega_1+\lambda\omega_2$ for all $\theta,\lambda>0$. There are many other examples in case $n>1$ and $\alpha\neq0$, namely $\omega(x_1,\ldots,x_n)=|x_1|^\alpha$. In case $n>1$ and $\alpha=0$, we can construct a non-trivial example as the following: let $\phi$ be any positive, even and locally integrable function on $S_n=\{x\in\mathbb R^n:\;|x|=1\}$, then
\[
\omega(x)=
\begin{cases} 
\phi\left(\frac x{|x|}\right)\qquad\text{if $x\neq0$},&\\
\quad0\quad\;\;\;\qquad\text{if $x=0$},&\\
\end{cases}
\]
is in $\mathcal W_0$.


\begin{lemma}\label{lem1} For any real number $\alpha$, if $\omega\in \mathcal W_\alpha$, and $\epsilon>0$, then
\[
\int_{|x|>1}\frac{\omega(x)}{|x|^{n+\alpha+\epsilon}}dx=\int_{|x|<1}\frac{\omega(x)}{|x|^{n+\alpha-\epsilon}}dx=
\begin{cases}
\frac{\int_{S_n}\omega(x)d\sigma(x)}{\epsilon}\quad\text{\rm if}\;n>1,\vspace*{8pt}&\\ 
\quad\frac{2\omega(1)}{\epsilon}\quad\qquad\text{\rm if}\;n=1.\end{cases}
\]
\end{lemma}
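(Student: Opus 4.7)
The plan is to compute both integrals explicitly by reducing to radial form using the homogeneity of $\omega$, after which the angular and radial parts decouple.

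For the case $n > 1$, I would invoke the polar-coordinate decomposition cited just above the lemma: under $\Phi(x) = (|x|, x/|x|)$, Lebesgue measure on $\mathbb R^n$ pushes forward to $r^{n-1}\,dr \times d\sigma(y)$ on $(0,\infty) \times S_n$. Since $\omega$ is absolutely homogeneous of degree $\alpha$, we have $\omega(ry) = r^\alpha \omega(y)$ for $r > 0$, $y \in S_n$. Thus
\begin{equation*}
\int_{|x|>1}\frac{\omega(x)}{|x|^{n+\alpha+\epsilon}}\,dx
= \int_1^\infty \int_{S_n} \frac{r^\alpha \omega(y)}{r^{n+\alpha+\epsilon}}\, r^{n-1}\, d\sigma(y)\, dr
= \left(\int_{S_n}\omega(y)\,d\sigma(y)\right)\int_1^\infty r^{-1-\epsilon}\,dr,
\end{equation*}
which equals $\epsilon^{-1}\int_{S_n}\omega\,d\sigma$. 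The same calculation on $\{|x|<1\}$ with exponent $n+\alpha-\epsilon$ yields $\int_0^1 r^{\epsilon-1}\,dr = 1/\epsilon$, giving the same value. The Fubini/Tonelli step is justified by nonnegativity of the integrand.

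For $n = 1$, the sphere $S_1 = \{-1, 1\}$ plays no genuine role, so I would handle it directly. Absolute homogeneity gives $\omega(x) = |x|^\alpha \omega(1)$ for all $x \neq 0$, in particular $\omega$ is even, so
\begin{equation*}
\int_{|x|>1}\frac{\omega(x)}{|x|^{1+\alpha+\epsilon}}\,dx
= 2\omega(1)\int_1^\infty x^{-1-\epsilon}\,dx = \frac{2\omega(1)}{\epsilon},
\end{equation*}
and similarly for the integral over $\{|x|<1\}$. This matches the stated convention $\int_{S_n}\omega\,d\sigma = 2\omega(1)$.

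There is no real obstacle here; the only thing to be careful about is that while $\omega \in \mathcal W_\alpha$ need not be locally integrable on $\mathbb R^n$, the factors $|x|^{-n-\alpha\pm\epsilon}$ combine with $\omega$ to produce $r^{\mp\epsilon-1}$ after the change to polar coordinates, which is integrable on $(1,\infty)$ (respectively $(0,1)$) for $\epsilon > 0$ regardless of the sign of $\alpha$; and that the assumption $0 < \int_{S_n}\omega\,d\sigma < \infty$ guarantees the angular factor is a finite positive constant. So the identities follow by a single application of Tonelli's theorem in each case.
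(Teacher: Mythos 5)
Your proof is correct and follows essentially the same route as the paper's: polar coordinates plus the degree-$\alpha$ homogeneity of $\omega$ to decouple the radial and angular integrals, with the $n=1$ case handled directly via $\omega(x)=\omega(1)|x|^\alpha$. If anything, your radial factor $\int_1^\infty r^{-1-\epsilon}\,dr$ is written more carefully than the paper's intermediate display (which shows $\int_1^\infty r^{-n-\epsilon}\,dr$ yet still evaluates it to $1/\epsilon$), so nothing needs to be added.
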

\begin{proof}Since the proofs of these equalities are similar, we shall only give the proof of 
\[
\int_{|x|>1}\frac{\omega(x)}{|x|^{n+\alpha+\epsilon}}dx=
\begin{cases}
\frac{\int_{S_n}\omega(x)d\sigma(x)}{\epsilon}\quad\text{\rm if}\;n>1,\vspace*{8pt}&\\ 
\quad\frac{2\omega(1)}{\epsilon}\quad\qquad\text{\rm if}\;n=1\end{cases}
\]
We first consider the case $n>1$. 
Then
\[
\int_{|x|>1}\frac{\omega(x)}{|x|^{n+\alpha+\epsilon}}dx=\int_1^\infty dr\int_{S(0,r)}\frac{\omega(y)}{r^{n+\alpha+\epsilon}} d\sigma(y).
\]
Since $\omega\in\mathcal W_\alpha$, we have
\[
\int_{|x|>1}\frac{\omega(x)}{|x|^{n+\alpha+\epsilon}}dx=\int_1^\infty \frac1{r^{n+\epsilon}}dr\cdot \int_{S_n}\omega(y)d\sigma(y)=\frac1\epsilon {\int_{S_n}\omega(x)d\sigma(x)}.
\]
We now consider the case $n=1$. Since $\omega\in\mathcal W_\alpha$, we have $\omega(x)=c|x|^\alpha$, for some contant $c>0$. We have
\[
\int_{|x|>1}\frac{\omega(x)}{|x|^{1+\alpha+\epsilon}}dx=\int_1^{\infty}\frac1{x^{1+\alpha+\epsilon}}\left(\omega(x)+\omega(-x)\right)dx=\frac{2c}\epsilon.
\]
\end{proof}
\vskip12pt
In \cite{xiao}, Jie Xiao proved that $U_\psi$ is bounded on $L^p(\mathbb R^n)$, if and only if $$\mathcal A=\int_0^1t^{-n/p}\psi(t)dt<\infty.$$ He also showed that $\mathcal A$ is the $L^p$-operator norm   of $U_\psi$. One of our main results in this section is formulated as follows.
\begin{theorem}\label{theo1} Let $p\in[1;\infty]$, $\alpha$ be real numbers and $\omega\in \mathcal W_{\alpha}$. Let $s:[0;1]\to\mathbb R$ be a measurable function so that $|s(t)|\geq t^\beta$ a.e. $t\in[0,1]$, for some constant $\beta>0$. Then, $U_{\psi,s}:L^p(\omega)\to L^p(\omega)$ exists as a bounded operator if and only if 
\begin{equation}\label{eq6}
\int_0^1|s(t)|^{-\frac{n+\alpha}p}\psi(t)dt<\infty.
\end{equation}
Moreover, when (\ref{eq6}) holds, the operator norm of $U_{\psi,s}$ on $L^p(\omega)$ is given by
\begin{equation}\label{eq7}
\|U_{\psi,s}\|_{L^p(\omega)\to L^p(\omega)}=\int_0^1|s(t)|^{-\frac{n+\alpha}p}\psi(t)dt.
\end{equation}
\end{theorem}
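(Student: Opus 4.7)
The plan is to obtain (\ref{eq7}) by proving the two reverse inequalities $\|U_{\psi,s}\|_{L^p(\omega)\to L^p(\omega)} \leq \int_0^1 |s(t)|^{-(n+\alpha)/p}\psi(t)\,dt$ and $\|U_{\psi,s}\|_{L^p(\omega)\to L^p(\omega)} \geq \int_0^1 |s(t)|^{-(n+\alpha)/p}\psi(t)\,dt$; the biconditional in (\ref{eq6}) is then automatic, with unboundedness in the failure case following from the lower-bound calculation.

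For the upper bound, Minkowski's integral inequality applied to the definition of $U_{\psi,s}$ gives $\|U_{\psi,s} f\|_{L^p(\omega)} \leq \int_0^1 \|f(s(t)\,\cdot\,)\|_{L^p(\omega)}\psi(t)\,dt$. The key dilation identity $\|f(s(t)\,\cdot\,)\|_{L^p(\omega)} = |s(t)|^{-(n+\alpha)/p}\|f\|_{L^p(\omega)}$ follows from the change of variable $y = s(t)x$, which contributes a Jacobian $|s(t)|^{-n}$, combined with the absolute homogeneity $\omega(y/s(t)) = |s(t)|^{-\alpha}\omega(y)$, producing the factor $|s(t)|^{-n-\alpha}$ inside the integral. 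Substituting back gives the upper bound; the endpoint $p=\infty$ is immediate since the dilation identity collapses to $\|f(s(t)\,\cdot\,)\|_\infty = \|f\|_\infty$.

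For the lower bound I would use the family of test functions $f_\epsilon(x) = |x|^{-(n+\alpha)/p - \epsilon}\chi_{\{|x|>1\}}$ depending on a parameter $\epsilon > 0$ and an auxiliary cutoff $A > 1$. Lemma \ref{lem1} applied with exponent $\epsilon p$ yields $\|f_\epsilon\|_{L^p(\omega)}^p = \bigl(\int_{S_n}\omega\,d\sigma\bigr)/(\epsilon p)$. Whenever $|x| > A$ and $|s(t)| \geq 1/A$ one has $|s(t)x| > 1$, so $f_\epsilon(s(t)x) = |s(t)x|^{-(n+\alpha)/p-\epsilon}$, giving the pointwise bound $U_{\psi,s} f_\epsilon(x) \geq |x|^{-(n+\alpha)/p-\epsilon}\, I_A(\epsilon)$ where $I_A(\epsilon) := \int_0^1 |s(t)|^{-(n+\alpha)/p - \epsilon}\chi_{\{|s(t)| \geq 1/A\}}\psi(t)\,dt$. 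Raising to the $p$-th power, integrating against $\omega(x)\,dx$ over $\{|x| > A\}$, and rescaling $x = Ay$ using absolute homogeneity converts the spatial integral into $A^{-\epsilon p}\|f_\epsilon\|_{L^p(\omega)}^p$, whence $\|U_{\psi,s}\|_{L^p(\omega)\to L^p(\omega)} \geq I_A(\epsilon) A^{-\epsilon}$. Sending $\epsilon \to 0^+$ first with $A$ fixed, and then $A \to \infty$, delivers the matching lower bound.

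The main obstacle is the order of the two limits in the lower-bound step: letting $A \to \infty$ first would collapse the factor $A^{-\epsilon}$ to zero, so $\epsilon \to 0^+$ must be performed first. Justifying the inner limit requires a bound on $|s(t)|^{-\epsilon}$ on $\{|s(t)| \geq 1/A\}$, which is trivial (bounded by $A$) for $\epsilon \in (0,1)$, enabling dominated convergence in the $t$-variable and giving $I_A(\epsilon) \to I_A(0)$ and $A^{-\epsilon} \to 1$. The hypothesis $|s(t)| \geq t^\beta$ is used at the outer limit only qualitatively, to guarantee that $s$ vanishes on a null set so that $\chi_{\{|s(t)|\geq 1/A\}} \uparrow 1$ almost everywhere and monotone convergence produces $I_A(0) \uparrow \int_0^1 |s(t)|^{-(n+\alpha)/p}\psi(t)\,dt$.
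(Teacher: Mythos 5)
Your proposal is correct and follows essentially the same route as the paper: Minkowski's integral inequality plus the dilation identity $\|f(s(t)\cdot)\|_{L^p(\omega)}=|s(t)|^{-\frac{n+\alpha}{p}}\|f\|_{L^p(\omega)}$ for the upper bound, and the truncated power functions $f_\epsilon(x)=|x|^{-\frac{n+\alpha}{p}-\epsilon}\chi_{\{|x|>1\}}$ for the lower bound. The only (harmless) divergence is in the bookkeeping of the lower bound: you decouple the spatial cutoff $A$ from $\epsilon$ and take iterated limits, so that $|s(t)|\geq t^{\beta}$ is used only to guarantee $s(t)\neq0$ a.e., whereas the paper couples the cutoff to $\epsilon$ via $\delta=\epsilon^{-1}$ and the region $|x|\geq\delta^{\beta}$; note that for your inner limit Fatou's lemma is the cleaner justification (giving $\liminf_{\epsilon\to0^+}I_A(\epsilon)\geq I_A(0)$, which is all you need), since the integrability of your proposed dominating function is not known a priori.
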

\vskip12pt

\begin{proof}Since the case $p=\infty$ is trivial, it suffices to consider $p\in[1;\infty)$. Suppose (\ref{eq6}) holds. For each $f\in L^p(\omega)$, since $s(t)\neq0$ almost everywhere, $\omega$ is homogeneous of order $\alpha$, and then applying Minkowski's inequality (see \cite[page 14]{hardy}) we shall obtain 
\begin{align*}
\|U_{\psi,s}f\|_{L^p(\omega)}=&\left(\int_{\mathbb R^n}\left|\int_0^1 f\left(s(t)\cdot x\right)\psi(t)dt\right|^p\omega(x)dx\right)^{1/p}\\
\leq& \int_0^1\left(\int_{\mathbb R^n}\left|f\left(s(t)\cdot x\right)\right|^p\omega(x)dx\right)^{1/p}\psi(t)dt\\
 =&\int_0^1\left(\int_{\mathbb R^n}\left|f\left(y\right)\right|^p\left|s(t)\right|^{-\alpha-n}\omega(y)dy\right)^{1/p}\psi(t)dt\\
=&\|f\|_{L^p(\omega)}\cdot\int_0^1|s(t)|^{-\frac{n+\alpha}p}\psi(t)dt<\infty.
\end{align*}
\vskip12pt
Thus, $U_{\psi,s}$ is defined as a bounded operator on $L^p(\omega)$ and the operator norm of $U_{\psi,s}$ on $L^p(\omega)$ is satisfied
\begin{equation}\label{th1eqa}
\|U_{\psi,s}\|_{L^p(\omega)\to L^p(\omega)}\leq \int_0^1|s(t)|^{-\frac{n+\alpha}p}\psi(t)dt.
\end{equation}
\vskip12pt 
Conversely, assuming that $U_{\psi,s}$ is defined as a bounded operator on $L^p(\omega)$. For any $0<\epsilon<1$, we put 
\begin{equation}\label{th1eqb}
f_\epsilon(x)=
\begin{cases}
0,\qquad\qquad\qquad\text{if $|x|\leq 1$},&\\
|x|^{-\frac{n+\alpha}p-\epsilon},\qquad\text{if $|x|>1$}.
\end{cases}
\end{equation}

Applying lemma \ref{lem1}, $f_\epsilon\in L^p(\omega)$ and $\|f_\epsilon\|_{L^p(\omega)}>0$, we have
\[
U_{\psi,s}f_\epsilon(x)=|x|^{-\frac{n+\alpha}p-\epsilon}\int_{S(t,x)}|s(t)|^{-\frac{n+\alpha}p-\epsilon}\psi(t)dt,
\]
here $S(t,x)=\{t\in[0,1]\,|\;\text{so that}\;|s(t)\cdot x|>1\}$. Hence
\[
\|U_{\psi,s}f_\epsilon\|^p_{L^p(\omega)}=\int_{\mathbb R^n}|x|^{-n-\alpha-p\epsilon}\left|\int_{S(t,x)}|s(t)|^{-\frac{n+\alpha}p-\epsilon}\psi(t)dt\right|^p\omega(x)dx.
\]
Since $|s(t)|\geq t^\beta$ for a.e. $t\in[0,1]$, there exists a measurable subset $E$ of $[0,1]$ with $|E|=0$ so that 
$$S(t,x)\supset\{t\in[0,1]\;|\;t\geq 1/|x|^{1/\beta}\}\setminus E.$$
Put $\delta=\epsilon^{-1}$, then
\[
\|U_{\psi,s}f_\epsilon\|^p_{L^p(\omega)}\geq \left(\int_{|x|\geq \delta^\beta}|x|^{-n-\alpha-p\epsilon}\omega(x)dx\right)\cdot\left(\int_{1/\delta}^1|s(t)|^{-\frac{n+\alpha}p-\epsilon}\psi(t)dt\right)^p
\]
\[
=\|f_\epsilon\|^p_{L^p(\omega)}\cdot \left(\delta^{-\beta\epsilon}\int_{1/\delta}^1|s(t)|^{-\frac{n+\alpha}p-\epsilon}\psi(t)dt\right)^p.
\]
So we have
\[
\|U_{\psi,s}\|_{L^p(\omega)\to L^p(\omega)}\geq\delta^{-\beta\epsilon}\int_{1/\delta}^1|s(t)|^{-\frac{n+\alpha}p-\epsilon}\psi(t)dt.
\]
Letting $\epsilon\to0^+$ we obtain
\begin{equation}\label{th1eqc}
\int_{0}^1|s(t)|^{-\frac{n+\alpha}p}\psi(t)dt\leq \|U_{\psi,s}\|_{L^p(\omega)\to L^p(\omega)}<\infty.
\end{equation}
From (\ref{th1eqa}) and (\ref{th1eqc}), we will receive (\ref{eq7}).

\end{proof}
\vskip12pt
Using the proof of theorem \ref{theo1}, we could find a sufficient condition on $\psi$ such that the integral operator $\mathcal U_{\psi,s}$, which is determined as
\[
\mathcal U_{\psi,s}f(x)=\int_0^\infty f\left(s(t)x\right)\psi(t)dt.
\]
is bounded on $L^p(\omega)$.
\begin{theorem}\label{theo1a} Let $p\in[1;\infty]$, $\alpha$ be real numbers and $\omega\in \mathcal W_{\alpha}$. Let $s:[0;1]\to\mathbb R$ be a measurable function. Then, $\mathcal U_{\psi,s}:L^p(\omega)\to L^p(\omega)$ exists as a bounded operator if, 
\begin{equation}\label{eq6c}
\int_0^\infty|s(t)|^{-\frac{n+\alpha}p}\psi(t)dt<\infty,
\end{equation}
and 
\[
\|\mathcal U_{\psi,s}\|_{L^p(\omega)\to L^p(\omega)}\leq \int_0^\infty|s(t)|^{-\frac{n+\alpha}p}\psi(t)dt<\infty.
\]
\end{theorem}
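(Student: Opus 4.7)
The plan is to mirror the sufficiency half of the proof of Theorem \ref{theo1}, but with the integral over $[0,1]$ replaced by an integral over $[0,\infty)$. Since only the upper bound on the operator norm is asserted (not a matching lower bound), no lower estimate $|s(t)|\geq t^\beta$ is required, and the argument reduces to a single application of Minkowski's integral inequality followed by the homogeneity of $\omega$.

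First I would dispose of the case $p=\infty$, which is immediate: $|\mathcal U_{\psi,s}f(x)|\leq \|f\|_{L^\infty(\omega)}\int_0^\infty \psi(t)\,dt$, and the integrand $|s(t)|^{-(n+\alpha)/\infty}\psi(t)$ is interpreted as $\psi(t)$. For $p\in[1,\infty)$, fix $f\in L^p(\omega)$ and apply Minkowski's inequality to the vector-valued integral $t\mapsto f(s(t)\cdot)\psi(t)$ regarded as taking values in $L^p(\omega)$, obtaining
\begin{equation*}
\|\mathcal U_{\psi,s}f\|_{L^p(\omega)}
\leq \int_0^\infty \left(\int_{\mathbb R^n} |f(s(t)x)|^p\omega(x)\,dx\right)^{1/p}\psi(t)\,dt.
\end{equation*}

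Next, since $\psi(t)|s(t)|^{-(n+\alpha)/p}$ is assumed integrable on $(0,\infty)$, we have $s(t)\neq 0$ for a.e.\ $t$, so the change of variables $y=s(t)x$ is legitimate. Combined with the absolute homogeneity $\omega(y/s(t))=|s(t)|^{-\alpha}\omega(y)$ and the Jacobian factor $|s(t)|^{-n}$, the inner integral equals $|s(t)|^{-n-\alpha}\|f\|_{L^p(\omega)}^p$. Substituting back gives
\begin{equation*}
\|\mathcal U_{\psi,s}f\|_{L^p(\omega)} \leq \|f\|_{L^p(\omega)}\int_0^\infty |s(t)|^{-\frac{n+\alpha}{p}}\psi(t)\,dt,
\end{equation*}
which is the desired bound on the operator norm.

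There is no real obstacle here; the only minor point worth flagging is that Minkowski's inequality applies unchanged on the unbounded parameter interval $(0,\infty)$ because both sides are interpreted as Lebesgue integrals of nonnegative measurable functions (so Tonelli-type monotone convergence handles the extension from finite to infinite measure), and the finiteness of the right-hand side under hypothesis \eqref{eq6c} guarantees that $\mathcal U_{\psi,s}f$ is well-defined almost everywhere. One might remark that, unlike in Theorem \ref{theo1}, the converse direction cannot be recovered by testing on $f_\epsilon(x)=|x|^{-(n+\alpha)/p-\epsilon}\mathbf{1}_{|x|>1}$: without a lower bound on $|s(t)|$ for large $t$, the set $S(t,x)$ controlling the lower estimate collapses and the argument breaks down, which is exactly why Theorem \ref{theo1a} asserts only a sufficient condition.
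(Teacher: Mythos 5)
Your argument is correct and is exactly what the paper intends: the paper gives no separate proof of Theorem \ref{theo1a}, stating only that it follows from the proof of Theorem \ref{theo1}, and the sufficiency half of that proof is precisely your Minkowski-plus-homogeneity computation with $[0,1]$ replaced by $[0,\infty)$. Your closing remark about why the necessity argument fails without a lower bound on $|s(t)|$ is a sensible observation consistent with the paper's decision to state only a sufficient condition here.
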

\vskip12pt
Using theorem \ref{theo1}, we could deduce some generalizations of Hardy's integral inequality. In case $s(t)=t$, $\omega\equiv1$ we obtain the above J. Xiao's result (see \cite[page 662]{xiao}). On the other hand, in case $n=1$, $\omega(x)=|x|^{p-b-1}$, and $\psi(t)\equiv1$ we get 
\begin{equation}\label{th1eqd}
\left(\int_0^\infty\left(\int_0^x|f(t)|dt\right)^px^{-b-1}dx\right)^{1/p}\leq \frac pb\left(\int_0^\infty |f(t)|^pt^{p-b-1}dt\right)^{1/p},
\end{equation}
and in case $n=1$, $s(t)=\frac1t$, $\psi(t)=t^{-2}$ and $\omega(x)=|x|^{p+b-1}$, then
\begin{equation}\label{th1eqe}
\left(\int_0^\infty\left(\int_0^x|f(t)|dt\right)^px^{-b-1}dx\right)^{1/p}\leq \frac pb\left(\int_0^\infty |f(t)|^pt^{p-b-1}dt\right)^{1/p}.
\end{equation}
Two such elementary well-known Hardy integral inequalities can be found in \cite[page 29]{grafakos}. 
\begin{corollary}
\label{coro1} Let $p\in[1;\infty]$, $\alpha$ be real numbers and $\omega\in \mathcal W_{\alpha}$. Let $s:[0;1]\to\mathbb R$ be a measurable function so that $|s(t)|\geq t^\beta$ a.e. $t\in[0,1]$, for some constant $\beta>0$. Then, $V_{\psi,s}:L^p(\omega)\to L^p(\omega)$ exists as a bounded operator if and only if 
\begin{equation}\label{eq8}
\int_0^1|s(t)|^{n-\frac{n+\alpha}p}\psi(t)dt<\infty.
\end{equation}
Moreover, when (\ref{eq8}) holds, the operator norm of $V_{\psi,s}$ on $L^p(\omega)$ is given by
\begin{equation}\label{eq9}
\|V_{\psi,s}\|_{L^p(\omega)\to L^p(\omega)}=\int_0^1|s(t)|^{n-\frac{n+\alpha}p}\psi(t)dt.
\end{equation}
\end{corollary}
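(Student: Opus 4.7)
The plan is to reduce Corollary \ref{coro1} to Theorem \ref{theo1} by absorbing the factor $|s(t)|^n$ appearing in the definition of $V_{\psi,s}$ into the weight function $\psi$. Comparing the definitions (\ref{eq3}) and (\ref{eq3a}), one sees immediately that
\[
V_{\psi,s} f(x) = U_{\widetilde{\psi}, s} f(x), \qquad \text{where } \widetilde{\psi}(t) := |s(t)|^n \psi(t).
\]
Since $\psi$ is nonnegative and measurable and $s$ is measurable, $\widetilde{\psi} : [0,1] \to [0,\infty)$ is also measurable, so it is a legitimate weight function to which Theorem \ref{theo1} applies.

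Next I would invoke Theorem \ref{theo1} directly with $\widetilde\psi$ in place of $\psi$. The hypothesis $|s(t)| \geq t^\beta$ a.e.\ on $[0,1]$ is assumed in both statements, and $\omega \in \mathcal{W}_\alpha$ is unchanged. Theorem \ref{theo1} then says $U_{\widetilde\psi, s}$ is bounded on $L^p(\omega)$ iff
\[
\int_0^1 |s(t)|^{-\frac{n+\alpha}{p}}\, \widetilde\psi(t)\, dt = \int_0^1 |s(t)|^{n - \frac{n+\alpha}{p}}\, \psi(t)\, dt < \infty,
\]
which is precisely condition (\ref{eq8}). Moreover, when this holds, the operator norm formula (\ref{eq7}) gives
\[
\|V_{\psi,s}\|_{L^p(\omega) \to L^p(\omega)} = \|U_{\widetilde\psi, s}\|_{L^p(\omega) \to L^p(\omega)} = \int_0^1 |s(t)|^{n - \frac{n+\alpha}{p}}\, \psi(t)\, dt,
\]
establishing (\ref{eq9}).

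There is essentially no obstacle here: the only thing to verify is that the substitution $\widetilde\psi = |s(t)|^n \psi$ is legitimate (nonnegativity and measurability), and then the arithmetic $-\frac{n+\alpha}{p} + n = n - \frac{n+\alpha}{p}$ in the exponent. The case $p = \infty$ is again trivial (or handled by interpreting the exponent appropriately), and the case $p \in [1, \infty)$ follows directly from Theorem \ref{theo1} as described. No separate Minkowski argument or test-function construction is needed, since both directions and the norm identity are inherited from the theorem.
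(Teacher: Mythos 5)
Your proof is correct and is essentially identical to the paper's own argument, which also reduces the corollary to Theorem \ref{theo1} via the identity $V_{\psi,s}f = U_{|s(\cdot)|^n\psi,\,s}f$. Nothing further is needed.
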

\begin{proof} This is an immediate consequence of theorem \ref{theo1} with the relation $V_{\psi,s}f(x)=U_{|s(\cdot)|^n\psi,s}f(x)$.
\end{proof}

\begin{corollary}Let $s,\psi$ be measurable functions  on $[0,1]$, and $\omega\in\mathcal W_\alpha$, and \vspace{8pt}
\begin{itemize}
\item[{(i)}] $\int\limits_0^1|s(t)|^{-\frac {n+\alpha }p}\psi(t)dt<\infty$,\vspace{8pt}

\item[{(ii)}] There are two positive real numbers $\beta,\gamma$ so that $t^\beta\leq |s(t)|\leq t^{-\gamma}$ a.e. $t\in[0,1]$.\vspace{8pt}
\end{itemize}
Then two operators $U_{\psi,s}$ and $V_{|s(\cdot)|^{-\alpha}\psi,1/s}$ are mutually adjoint in the sense: for any $f\in L^p(\omega)$, $g\in L^q(\omega)$, $1<p<\infty$ and $\frac 1p+\frac1q=1$, we have 
\begin{equation}\label{eq9a}
\int_{\mathbb R^n}g(x)U_{\psi,s}f(x)\omega(x)dx=\int_{\mathbb R^n}f(y)\left(V_{|s(\cdot)|^{-\alpha}\psi,1/s}g(y)\right)\omega(y)dy.
\end{equation}
\end{corollary}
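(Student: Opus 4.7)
The plan is to establish (\ref{eq9a}) by combining Fubini's theorem with the change of variables $y=s(t)x$ and the absolute $\alpha$-homogeneity of $\omega$. Before doing anything, I would verify that both sides make sense as absolutely convergent integrals. By Theorem \ref{theo1}, hypothesis (i) together with the lower bound $|s(t)|\geq t^\beta$ from (ii) gives that $U_{\psi,s}$ is bounded on $L^p(\omega)$. For the other operator, the upper bound $|s(t)|\leq t^{-\gamma}$ yields $|1/s(t)|\geq t^\gamma$, so Corollary \ref{coro1} applied to parameter curve $1/s$ and weight $|s(\cdot)|^{-\alpha}\psi$ reduces the integrability condition to
$$\int_0^1|1/s(t)|^{n-(n+\alpha)/q}|s(t)|^{-\alpha}\psi(t)\,dt=\int_0^1|s(t)|^{-(n+\alpha)/p}\psi(t)\,dt,$$
which is exactly (i) (using $1-1/q=1/p$). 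So $V_{|s(\cdot)|^{-\alpha}\psi,1/s}$ is also bounded, this time on $L^q(\omega)$, and both sides of (\ref{eq9a}) are finite by H\"older's inequality.

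Next I would verify the Fubini hypothesis. The double integral
$$\int_{\mathbb R^n}\int_0^1|g(x)||f(s(t)x)|\psi(t)\omega(x)\,dt\,dx \leq \int_{\mathbb R^n}|g(x)|\,U_{\psi,s}|f|(x)\,\omega(x)\,dx\leq \|g\|_{L^q(\omega)}\|U_{\psi,s}|f|\|_{L^p(\omega)}$$
is finite by the boundedness of $U_{\psi,s}$, so Fubini applies. Swapping the order of integration gives
$$\text{LHS of (\ref{eq9a})}=\int_0^1\psi(t)\int_{\mathbb R^n}g(x)f(s(t)x)\omega(x)\,dx\,dt.$$
In the inner integral I would substitute $y=s(t)x$, which is legitimate because hypothesis (ii) forces $s(t)\neq 0$ for a.e.\ $t\in(0,1]$. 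This yields $dx=|s(t)|^{-n}dy$ and, by the absolute $\alpha$-homogeneity of $\omega$, $\omega(y/s(t))=|s(t)|^{-\alpha}\omega(y)$, so the inner integral becomes $|s(t)|^{-n-\alpha}\int_{\mathbb R^n}g(y/s(t))f(y)\omega(y)\,dy$.

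Finally, applying Fubini one more time to interchange the $t$- and $y$-integrals, the expression becomes
$$\int_{\mathbb R^n}f(y)\omega(y)\left(\int_0^1 g(y/s(t))|s(t)|^{-n-\alpha}\psi(t)\,dt\right)dy,$$
and the inner $t$-integral is precisely $V_{|s(\cdot)|^{-\alpha}\psi,1/s}g(y)$ by unwinding the definition of $V_{\psi',s'}$ with $s'(t)=1/s(t)$, $\psi'(t)=|s(t)|^{-\alpha}\psi(t)$, which produces the factor $|1/s(t)|^n\cdot|s(t)|^{-\alpha}=|s(t)|^{-n-\alpha}$. The main obstacle is essentially just bookkeeping: one must be careful to extract from hypothesis (ii) both the lower bound needed to apply Theorem \ref{theo1} to $U_{\psi,s}$ \emph{and} the upper bound needed to apply Corollary \ref{coro1} to the conjugate $V$-operator, so that Fubini is justified on both sides; once that is in place the computation is a direct substitution.
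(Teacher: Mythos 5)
Your proposal is correct and follows essentially the same route as the paper: finiteness of both sides via Theorem \ref{theo1} and Corollary \ref{coro1}, then Fubini together with the substitution $y=s(t)x$ and the $\alpha$-homogeneity of $\omega$. Your version is in fact slightly more careful, since you verify the absolute-convergence hypothesis for Fubini explicitly and check that the integrability condition for $V_{|s(\cdot)|^{-\alpha}\psi,1/s}$ on $L^q(\omega)$ reduces exactly to hypothesis (i), steps the paper leaves implicit.
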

\begin{proof}
If $f\in L^p(\omega)$, $g\in L^q(\omega)$, theorem \ref{theo1} and corollary \ref{coro1} show that $U_{\psi,s}f\in L^p(\omega)$ and $V_{|s(\cdot)|^{-\alpha}\psi,1/s}g\in L^q(\omega)$. Hence, both sides of (\ref{eq9a}) are finite. With the help of Fubini theorem, we have
\begin{align*}
\int_{\mathbb R^n}g(x)U_{\psi,s}f(x)\omega(x)dx=&\int_{\mathbb R^n}g(x)\left(\int_0^1f\left(s(t)x\right)\psi(t)dt\right)\omega(x)dx\\ 
=&\int_0^1\left(\int_{\mathbb R^n}g(x)f\left(s(t)x\right)\omega(x)dx\right)\psi(t)dt\\
=&\int_0^1\left(\int_{\mathbb R^n}g\left(y/s(t)\right)f\left(y\right)\omega(y)dx\right)|s(t)|^{-n-\alpha}\psi(t)dt\\	
=&\int_{\mathbb R^n}f\left(y\right)\omega(y)\left(\int_0^1g\left(y/s(t)\right)|s(t)|^{-n}\cdot|s(t)|^{-\alpha}\psi(t)dt\right)dy\\	
=&\int_{\mathbb R^n}f(y)\left(V_{|s(\cdot)|^{-n}\psi,1/s}g(y)\right)\omega(y)dy.
\end{align*}
\end{proof}

Now we characterize the weight function $\psi$ for which $U_{\psi,s}$ is bounded on $BMO(\omega)$, and we calculate the $BMO(\omega)-$norm of $U_{\psi,s}$.
\begin{theorem}\label{theo2}Let $p\in[1;\infty]$ be real number and $\omega$ belongs to $\mathcal W=\bigcup\limits_{\alpha>-n}\mathcal W_{\alpha}$. Let $s:[0;1]\to\mathbb R$ be a measurable function so that $s(t)\neq0$ almost everywhere on $[0,1]$.
\begin{itemize}
\item[{(i)}] If \begin{equation}\label{eq10}
\int_0^1\psi(t)dt<\infty,
\end{equation}
then $U_{\psi,s}:BMO(\omega)\to BMO(\omega)$ exists as a bounded operator and
\[
\|U_{\psi,s}\|_{BMO(\omega)\to BMO(\omega)}\leq\int_0^1\psi(t)dt.
\]
\item[{(ii)}] If $n=1$ and $U_{\psi,s}:BMO(\omega)\to BMO(\omega)$ exists as a bounded operator, then
\begin{equation}\label{eq11}
\left|\int_0^1{\rm sgn}s(t)\cdot \psi(t)dt\right|<\infty.
\end{equation}
Moreover, if $s(t)$ does not change sign on $[0,1]$, then the operator norm of $U_{\psi,s}$ on $BMO(\omega)$ is given by
\begin{equation}\label{eq12}
\|U_{\psi,s}\|_{BMO(\omega)\to BMO(\omega)}=\int_0^1\psi(t)dt.
\end{equation}
\item[{(iii)}] If $n>1$, then  $U_{\psi,s}:BMO(\omega)\to BMO(\omega)$ exists as a bounded operator, if and only if (\ref{eq10}) holds. Moreover, the norm of $U_{\psi,s}$ on $L^p(\omega)$ is also given by
\[
\|U_{\psi,s}\|_{BMO(\omega)\to BMO(\omega)}=\int_0^1\psi(t)dt.
\]
\end{itemize}
\end{theorem}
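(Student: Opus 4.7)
The plan is to exploit the affine behavior of the dilations $x\mapsto s(t)x$ together with the absolute homogeneity of $\omega$. The key geometric observation is that if $B=B(x_0,r)$ then $s(t)B=B(s(t)x_0,|s(t)|r)$ is again a ball, and the change of variables $y=s(t)x$ combined with $\omega(tx)=|t|^\alpha\omega(x)$ yields $\omega(s(t)B)=|s(t)|^{n+\alpha}\omega(B)$. This lets one transfer weighted averages over $B$ to weighted averages over $s(t)B$.

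For the sufficient direction (i), I would first compute, by Fubini and the above change of variables, that
\[
(U_{\psi,s}f)_{B,\omega}=\int_0^1 f_{s(t)B,\omega}\,\psi(t)\,dt.
\]
Writing
\[
U_{\psi,s}f(x)-(U_{\psi,s}f)_{B,\omega}=\int_0^1\bigl[f(s(t)x)-f_{s(t)B,\omega}\bigr]\psi(t)\,dt,
\]
I would take the weighted $L^1$-average of the left side over $B$, interchange the order of integration, and apply the same change of variables once more; the inner integral then reduces to the $BMO(\omega)$-oscillation of $f$ on $s(t)B$, which is dominated by $\|f\|_{BMO(\omega)}$. This delivers $\|U_{\psi,s}f\|_{BMO(\omega)}\leq\int_0^1\psi(t)\,dt\cdot\|f\|_{BMO(\omega)}$.

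For the necessity in (iii) (case $n>1$) I would test $U_{\psi,s}$ against $f(x)=\log|x|$. Using $\log|s(t)x|=\log|s(t)|+\log|x|$, one obtains formally
\[
U_{\psi,s}(\log|\cdot|)(x)=\Bigl(\int_0^1\psi(t)\,dt\Bigr)\log|x|+\text{const},
\]
so its $BMO(\omega)$-seminorm equals $\|\log|\cdot|\|_{BMO(\omega)}\cdot\int_0^1\psi(t)\,dt$; boundedness of $U_{\psi,s}$ therefore forces $\int_0^1\psi(t)\,dt<\infty$ and simultaneously furnishes the matching lower bound on the operator norm. For (ii) with $n=1$ the analogous test function is $f(x)=\mathrm{sgn}(x)$; since $\mathrm{sgn}(s(t)x)=\mathrm{sgn}(s(t))\mathrm{sgn}(x)$, one gets
\[
U_{\psi,s}(\mathrm{sgn})(x)=\mathrm{sgn}(x)\int_0^1\mathrm{sgn}(s(t))\psi(t)\,dt,
\]
and the same scheme yields the finiteness of $\bigl|\int_0^1\mathrm{sgn}(s(t))\psi(t)\,dt\bigr|$. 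When $s$ does not change sign this signed integral coincides with $\int_0^1\psi(t)\,dt$, and combining with (i) gives the operator-norm identity.

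The main obstacle I anticipate is verifying the two $BMO(\omega)$-positivity statements underlying the necessity arguments, namely $\|\log|\cdot|\|_{BMO(\omega)}>0$ for $n>1$ and $\|\mathrm{sgn}\|_{BMO(\omega)}>0$ for $n=1$. For the former, one computes explicitly the weighted mean of $\log|x|$ on a ball centered at the origin by spherical coordinates and the homogeneity of $\omega$, then compares with its mean on a translated ball to extract a strictly positive oscillation; for the latter, $\omega(x)=c|x|^\alpha$ is forced in one dimension, so the oscillation of $\mathrm{sgn}$ on a symmetric interval about the origin can be computed by a direct integration. A secondary subtlety is that the pointwise value of $U_{\psi,s}f$ on the test function $\log|\cdot|$ is only well defined once $\int_0^1\psi(t)\,dt<\infty$; this can be handled either by a truncation of $f$ or by observing that if the $\psi$-integral diverges then $U_{\psi,s}(\log|\cdot|)$ is identically infinite and so cannot belong to $BMO(\omega)$, contradicting boundedness.
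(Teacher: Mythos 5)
Your parts (i) and (ii) follow the paper's proof essentially verbatim: the Fubini identity $(U_{\psi,s}f)_{B,\omega}=\int_0^1 f_{s(t)B,\omega}\psi(t)\,dt$ plus the change of variables for (i), and the test function $f_0(x)=\mathrm{sgn}(x)$ with $f_0(tx)=\mathrm{sgn}(t)f_0(x)$ for (ii), including the explicit verification that $0<\|f_0\|_{BMO(\omega)}\le\tfrac12$. No issues there.

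Part (iii) is where you diverge from the paper, and your choice of test function $\log|x|$ creates two genuine gaps. First, the identity $U_{\psi,s}(\log|\cdot|)(x)=(\int_0^1\psi)\log|x|+\int_0^1\log|s(t)|\psi(t)\,dt$ requires the constant term to converge, and this does \emph{not} follow from the hypotheses: the theorem assumes only $s(t)\neq0$ a.e. (no lower bound such as $|s(t)|\ge t^\beta$), so for instance $s(t)=e^{-1/t}$, $\psi\equiv1$ gives $\int_0^1\psi<\infty$ while $\int_0^1|\log|s(t)||\psi(t)\,dt=\infty$ and $U_{\psi,s}(\log|\cdot|)$ is $-\infty$ everywhere. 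Thus your test function can be annihilated by the operator even in the regime where (i) asserts boundedness, and your fallback (``if the $\psi$-integral diverges then $U_{\psi,s}(\log|\cdot|)$ is identically infinite'') does not distinguish this case from the one you want to rule out; patching this requires a separate argument extracting $\int_0^1\psi<\infty$ from absolute convergence at two distinct radii. Second, you would need $\log|\cdot|\in BMO(\omega)$, i.e.\ \emph{finiteness} of its seminorm uniformly over all balls, for a general $\omega\in\mathcal W_\alpha$, $\alpha>-n$; you only discuss the (easy) positivity. This membership is not trivial --- the paper devotes a separate lemma to it and even there assumes $\omega$ is doubling, an assumption absent from Theorem~\ref{theo2}; one can prove it without doubling via a scaling-plus-compactness argument near the origin, but some such argument must be supplied. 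The paper avoids both problems at once by testing against $f_1(x)=\phi(x/|x|)$ with $\phi$ bounded, even and non-constant on $S_n$: this $f_1$ is $0$-homogeneous, so $f_1(s(t)x)=f_1(x)$ exactly and $U_{\psi,s}f_1=f_1\cdot\int_0^1\psi(t)\,dt$ with no extra constant, and $f_1\in L^\infty\subset BMO(\omega)$ trivially. I would recommend adopting that test function; your scheme for (iii) is repairable but only with substantial additional work that your sketch does not contain.
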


\begin{proof}

(i) Suppose (\ref{eq10}) holds and $\omega\in\mathcal W_\alpha$ for some $\alpha\geq-n$. Let $f$ be in $BMO(\omega)$, and $B$ be any ball of $\mathbb R^n$. By using Fubini's theorem, we have
\[
\left(U_{\psi,s}f\right)_{B,\omega}=\frac1{\omega(B)}\int_B\left(\int_0^1f\left(s(t)\cdot x\right)\psi(t)dt\right)\omega(x)dx= 
\]
\[
\int_0^1\left(\frac1{\omega(B)}\int_{s(t)\cdot B}f(y)\cdot|s(t)|^{-n-\alpha}\omega(y)dy\right)\psi(t)dt=\int_0^1f_{s(t)\cdot B,\omega}\psi(t)dt.
\]
Thus
\begin{align*}
&\qquad\frac1{\omega(B)}\int_B\left|U_{\psi,s}f(x)-\left(U_{\psi,s}f\right)_{B,\omega}\right|\omega(x)dx\\
&\leq\frac1{\omega(B)}\int_B\left(\int_0^1\left|f\left(s(t)\cdot x\right)-f_{s(t)\cdot B,\omega}\right|\psi(t)dt\right)\omega(x)dx\\
&=\int_0^1\left(\frac1{\omega(B)}\int_B\left|f\left(s(t)\cdot x\right)-f_{s(t)\cdot B,\omega}\right|\omega(x)dx\right)\psi(t)dt\\
&=\int_0^1\left(\frac1{\omega(B)}\int_{s(t)\cdot B}\left|f\left(y\right)-f_{s(t)\cdot B,\omega}\right|\omega(y)|s(t)|^{-n-\alpha}dy\right)\psi(t)dt\\
&=\int_0^1\left(\frac1{\omega(s(t)\cdot B)}\int_{s(t)\cdot B}\left|f\left(y\right)-f_{s(t)\cdot B,\omega}\right|\omega(y)dy\right)\psi(t)dt\\
&\leq \|f\|_{BMO(\omega)}\cdot\int_0^1\psi(t)dt.
\end{align*}
Hence, $\|U_{\psi,s}f\|_{BMO(\omega)}\leq  \|f\|_{BMO(\omega)}\cdot\int_0^1\psi(t)dt$, so $U_{\psi,s}$ is bounded on $BMO(\omega)$, and in this case
\begin{equation}\label{eq13}
\|U_{\psi,s}\|_{BMO(\omega)\to BMO(\omega)}\leq\int_0^1\psi(t)dt.
\end{equation}
\vskip12pt
(ii) We assume that $n=1$ and $U_{\psi,s}$ is bounded on $BMO(\omega)$. Let $f_0(x_1)={\rm sgn} x_1$, then for any segment $B=(x_0-r,x_0+r)$, where $r>0$, then
\[
\left(f_0\right)_{B,\omega}=\frac{\omega(B_+)-\omega(B_{-})}{\omega(B_+)+\omega(B_{-})},
\]
where $B_+=\{x\in B:\;x\geq0\}$ and $B_{-}=\{x\in B:\;x<0\}$. Using a well-known inequality $\frac{2ab}{(a+b)^2}\leq \frac12$ for any $a,b\geq0$, we shall obtain
\[
\frac1{\omega(B)}\int_B\left|f_0(x)-\left(f_0\right)_{B,\omega}\right|\omega(x)dx=\frac{2\omega(B_+)\cdot\omega(B_{-})}{\omega(B)^2}\leq \frac12.
\]
So we have proved that $f_0\in BMO(\omega)$, and $0<\|f_0\|_{BMO(\omega)}\leq\frac12$.
\vskip12pt
Since $f_0(tx)={\rm sgn}\, t\cdot f_0(x)$ for all $t\neq0$, we have
\[
U_{\psi,s}f_0(x)=f_0(x)\int_0^1{\rm sgn}\,s(t)\cdot \psi(t)dt.
\]
Thus
\[
\|U_{\psi,s}f_0\|_{BMO(\omega)}=\|f_0\|_{BMO(\omega)}\cdot\left|\int_0^1{\rm sgn}s(t)\cdot\psi(t)dt\right|.
\]
This implies 
\begin{equation}\label{eq14}
\left|\int_0^1{\rm sgn}s(t)\cdot\psi(t)dt\right|\leq\|U_{\psi,s}\|_{BMO(\omega)\to BMO(\omega)}<\infty.
\end{equation}

\vskip12pt
(iii) We consider the case $n>1$. In (i) we have proved that, the boundedness of $U_{\psi,s}$ could be implied from (\ref{eq10}). So now we assume that $U_{\psi,s}$ is bounded on $L^p(\omega)$. Let $S_{n}=\{x\in\mathbb{R}^n:|x|=1\}$ be the unit sphere in $\mathbb{R}^n$ and $\phi\colon S_{n}\to\mathbb{R}$ an essential upper bounded, even, non-constant function, that is
\[
\text{\rm ess.sup}_{t\in S_{n}}|\phi(t)|=\|\phi\|_\infty<\infty,\quad\phi(-t)=\phi(t)\qquad(\forall\; t\in S_{n}).
\]
Define
$$
f_1(x)=\begin{cases}
\phi(x/|x|) \quad\text{if}\; x\neq0,&\\
\quad 0 \;\;\;\;\qquad\text{if }\,x=0.
\end{cases}
$$
Since $f_1\in L^\infty(\mathbb R^n)$, then $f_1\in BMO(\omega)$ and $\|f_1\|_{BMO(\omega)}\neq0$. Note that $f_1(tx)=f_1(x)$ for $t\neq 0$, so 
\[
U_{\psi,s}f_1(x)=f_1(x)\int_0^1\psi(t)dt.
\]
Taking $BMO(\omega)-$norm both sides we have
\[
\|f_1\|_{BMO(\omega)}\int_0^1\psi(t)dt=\|U_{\psi,s}f_1\|_{BMO(\omega)}\leq \|f_1\|_{BMO(\omega)}\cdot \|U_{\psi,s}\|_{BMO(\omega)\to BMO(\omega)}.
\]
Thus (\ref{eq10}) holds and we have 
\[
\|U_{\psi,s}\|_{BMO(\omega)\to BMO(\omega)}= \int_0^1\psi(t)dt.
\]
\end{proof}

\vskip12pt
In theorem \ref{theo2}, let us put $s(t)=t$ and $\omega\equiv1$, we shall get the result of J. Xiao for $U_\psi$ as follows
\begin{corollary} $U_\psi: \,BMO(\mathbb R^n)\to BMO(\mathbb R^n)$ exists as a bounded operator, if and only if
\[
\int_0^1\psi(t)dt<\infty.
\]
Moreover, the norm operator of $U_\psi$ on $BMO(\mathbb R^n)$ is given by
\[
\|U_{\psi,s}\|_{BMO(\omega)\to BMO(\omega)}= \int_0^1\psi(t)dt.
\]
\end{corollary}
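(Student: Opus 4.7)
The plan is to deduce the corollary directly from Theorem~\ref{theo2} by specializing the parameter curve to $s(t)=t$ and the weight to $\omega\equiv 1$. First I verify the hypotheses: the constant function $\omega\equiv 1$ is absolutely homogeneous of degree $\alpha=0$ and satisfies $0<\int_{S_n}\omega\,d\sigma<\infty$, hence lies in $\mathcal W_0\subset\mathcal W$; and $s(t)=t$ is measurable and nonzero almost everywhere on $[0,1]$. With these choices $U_{\psi,s}$ coincides with $U_\psi$, so the corollary reduces to an application of Theorem~\ref{theo2}.

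I then split by dimension. For $n>1$, part (iii) of Theorem~\ref{theo2} immediately yields both the biconditional and the operator norm identity $\|U_\psi\|_{BMO(\omega)\to BMO(\omega)}=\int_0^1\psi(t)\,dt$. For $n=1$, the implication $\int_0^1\psi(t)\,dt<\infty\Rightarrow U_\psi$ bounded with $\|U_\psi\|_{BMO\to BMO}\leq\int_0^1\psi(t)\,dt$ is supplied by part (i). For the reverse direction I invoke the ``moreover'' clause of part (ii): since $s(t)=t\geq 0$ on $[0,1]$, $s$ does not change sign, so part (ii) gives $\|U_\psi\|_{BMO\to BMO}=\int_0^1\psi(t)\,dt$; in particular, boundedness of $U_\psi$ forces this integral to be finite, completing the necessity.

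The only point demanding any attention is that $\psi$ is non-negative by definition, so for our choice $s(t)=t\geq 0$ we have $\mathrm{sgn}\,s(t)\cdot\psi(t)=\psi(t)$ almost everywhere on $[0,1]$; this identifies the quantity appearing in part (ii) of Theorem~\ref{theo2} with $\int_0^1\psi(t)\,dt$. No serious obstacle is anticipated, as this corollary is essentially bookkeeping on top of Theorem~\ref{theo2}, translated to the classical unweighted $BMO(\mathbb R^n)$ setting of J.~Xiao.
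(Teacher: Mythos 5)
Your proposal is correct and follows exactly the route the paper intends: the corollary is obtained by specializing Theorem~\ref{theo2} to $s(t)=t$ and $\omega\equiv 1$, and your dimension split (part (iii) for $n>1$, parts (i) and (ii) for $n=1$, noting that $\mathrm{sgn}\,s(t)\equiv 1$ a.e.\ so the signed integral in (ii) reduces to $\int_0^1\psi(t)\,dt$) fills in the bookkeeping the paper leaves implicit. No gaps.
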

\section{Commutator of the generalized Hardy-Ces\`{a}ro operator}
Recently, Zun Wei Fu, Zong Guang Liu, and Shan Zhen Lu \cite{fu1} established a sufficient and necessary condition on weight function $\psi(t)$, which ensures the boundedness of the commutators (with symbols in $BMO(\mathbb R^n)$) of weighted Hardy operators $U_\psi$ and weighted Ces\`{a}ro operator $V_\psi$ on $L^p(\mathbb R^n)$. The purpose of this section is to extend their results to generalized Hardy-Ces\`{a}ro operator $U_{\psi,s}$ on weighted $L^p(\omega)$ spaces.  
\vskip12pt
Let $b$ be a measurable, locally integrable function and $\psi:[0,1]\to[0,\infty)$ be a function. We define the commutator of weighted Hardy-Ces\`{a}ro operator  $U_{\psi,s}$ as
\[
U^b_{\psi,s}f:=bU_{\psi,s}f-U_{\psi,s}(bf).
\]
Similarly we define the commutator of weighted Ces\`{a}ro operator $V^b_{\psi,s}$ as
\[
V^b_{\psi,s}f:=bV_{\psi,s}(bf)-bV_{\psi,s}(f).
\]
\vskip12pt
It is known that commutators of (singular) integral transforms have been of interest in many contexts, particularly in the theory of P.D.E. The Hardy operator $U$ is well-known to be controlled by Hardy-Littlewood maximal operator $M$. To obtain estimates for commutator of generalized Hardy-Ces\`{a}ro operator $U_{\psi,s}$, we use the weight Hardy-Littlewood maximal operator $M_\omega$ to control $U_{\psi,s}$. Our main result in this section is formulated as follows.
\begin{theorem}\label{theo3}
Let $\psi:[0,1]\to[0,\infty)$, $s:[0,1]\to\mathbb R$ be measurable functions, $\omega\in\mathcal W_\alpha$ has doubling property, $\alpha>-n$ and $1<p<\infty$. We also assume that there exists real constants $\beta,\gamma$ such that $\beta\gamma>0$ and $t^\beta\leq |s(t)|\leq t^\gamma$ for almost everywhere $t\in[0,1]$. 
\begin{itemize}
\item[{(a)}] If $U^b_{\psi,s}$ is bounded on $L^p(\omega)$ for any $b\in BMO(\omega)$, then
\begin{equation}\label{eq15}
\int\limits_0^1|s(t)|^{-\frac{n+\alpha}p}\cdot\left|\log\frac1{ |s(t)|}\right|\cdot\psi(t)dt<\infty.
\end{equation}
Moreover, if $\beta,\gamma>0$, then $\int\limits_0^{1/\delta} \psi(t)dt<\frac C{\left(\delta^{\gamma(\alpha+n)}-1\right)^{1/p}}$,
where $C$ is a positive constant, which does not depend on $\delta$.
\item[{(b)}] If $|s(t)|=t^\gamma$, and 
\begin{equation}\label{eq15a}
\begin{cases}
\int_0^1|s(t)|^{-\frac{n+\alpha}p}\cdot\left|\log\frac1{ |s(t)|}\right|\cdot\psi(t)dt<\infty,\vspace*{12pt}&\\
\int_0^1|s(t)|^{-\frac{n+\alpha}p}\cdot\psi(t)dt<\infty,&\\
\end{cases}
\end{equation}
then $U^b_{\psi,s}$ is bounded on $L^p(\omega)$ for any $b\in BMO(\omega)$ with operator norm
\[
\|U^b_{\psi,s}\|_{L^p(\omega)}\leq C_\omega \left(\int_0^1|s(t)|^{-\frac{n+\alpha}p}\psi(t)\left(2+\left|\log\frac1{ |s(t)|}\right|\right)dt\right)\cdot\|b\|_{BMO(\omega)}.
\]
\end{itemize}
\end{theorem}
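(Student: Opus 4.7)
We establish the theorem by working from the fundamental commutator identity
\begin{equation*}
U^b_{\psi,s}f(x) = \int_0^1 \bigl[b(x) - b(s(t)x)\bigr]\, f(s(t)x)\, \psi(t)\, dt,
\end{equation*}
which reduces both directions to controlling how $b(x) - b(s(t)x)$ interacts with BMO and with the dilation $x \mapsto s(t)x$.

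For the sufficiency in (b), I would apply Minkowski's integral inequality to move the $L^p(\omega)$-norm past the integral in $t$, reducing the estimate to the inner norm $\bigl\|[b - b(s(t)\cdot)]\, f(s(t)\cdot)\bigr\|_{L^p(\omega)}$ for each fixed $t$. The standard BMO trick is to pick, for each $x$, a ball $B_x = B(0,R)$ centered at the origin containing both $x$ and $s(t)x$, and to decompose
\begin{equation*}
b(x) - b(s(t)x) = \bigl[b(x) - b_{B_x,\omega}\bigr] + \bigl[b_{B_x,\omega} - b_{s(t)B_x,\omega}\bigr] + \bigl[b_{s(t)B_x,\omega} - b(s(t)x)\bigr].
\end{equation*}
The middle difference is a constant in $x$ and is bounded by $C\|b\|_{BMO(\omega)}\,|\log|s(t)||$ via the well-known concentric-ball comparison $|b_{B_1,\omega} - b_{B_2,\omega}| \leq C\|b\|_{BMO(\omega)}\log(r_{B_2}/r_{B_1})$, which follows from Lemma \ref{lem1a} by iterating through doublings. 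This is the source of the logarithmic factor in (\ref{eq15a}). The two outer differences are estimated through a dyadic decomposition over annuli $\{2^k \leq |x| < 2^{k+1}\}$, on which $B_x$ is comparable to a fixed ball $B_k$; Lemma \ref{lem1a} controls the $L^p(\omega)$-oscillation of $b$ on each $B_k$, and the dyadic sum is absorbed by the weighted Hardy--Littlewood maximal theorem (Lemma \ref{lem1c}) applied to $|f|^p$. The change of variables $y = s(t)x$ (which, under the exact assumption $|s(t)| = t^\gamma$, supplies the clean factor $|s(t)|^{-(n+\alpha)/p}$) then yields the operator norm bound stated in (b).

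For the necessity in (a), the key test function is $b_0(y) = \log|y|$, which lies in $BMO(\omega)$ for every $\omega \in \mathcal{W}_\alpha$ by the homogeneity of $\omega$. A direct computation gives
\begin{equation*}
U^{b_0}_{\psi,s}f(x) = \int_0^1 \bigl[\log|x| - \log|s(t)x|\bigr]\, f(s(t)x)\, \psi(t)\, dt = U_{\widetilde\psi,\,s}f(x),
\end{equation*}
where $\widetilde\psi(t) = -\log|s(t)|\cdot \psi(t)$. Under the hypothesis $\beta\gamma>0$ the sign of $-\log|s(t)|$ is constant on $(0,1]$ (nonnegative if $\beta,\gamma>0$, nonpositive if $\beta,\gamma<0$), so $|\widetilde\psi| = |\log|s(t)||\,\psi(t)$ is a legitimate nonnegative weight. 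The assumed boundedness of $U^{b_0}_{\psi,s}$ on $L^p(\omega)$ forces boundedness of $U_{|\log|s(\cdot)||\,\psi,\, s}$, to which the necessary direction of Theorem \ref{theo1} applies, giving exactly (\ref{eq15}). The quantitative bound $\int_0^{1/\delta}\psi(t)\,dt < C(\delta^{\gamma(n+\alpha)}-1)^{-1/p}$ in the $\beta,\gamma>0$ case is obtained by a similar test-function argument, now inserting a test $f$ in the spirit of $f_\epsilon$ in (\ref{th1eqb}) supported on $\{|x|\geq \delta^\gamma\}$ and again pairing with $b_0$: the contribution of the range $t\in(0,1/\delta)$ to $U^{b_0}_{\psi,s}f$ is quantified by restricting the $L^p(\omega)$-bound to this range, and the hypothesis $|s(t)|\leq t^\gamma$ converts the weight integral over $\{|x|\ge \delta^\gamma\}$ into the stated power of $\delta$.

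The main obstacle is part (b): the first and third terms in the BMO decomposition are not constants, so their contribution cannot be pulled out of the $L^p(\omega)$-integral and must be absorbed through maximal-function machinery. Getting the powers of $|s(t)|$ to emerge correctly requires that the dyadic decomposition respects the dilation structure, which is why the cleaner sufficiency statement assumes the exact form $|s(t)| = t^\gamma$ rather than the two-sided bound used in (a); this gives an explicit Jacobian and ensures that the scale comparison between $B_x$ and $s(t)B_x$ is controlled by a single parameter across all annular pieces.
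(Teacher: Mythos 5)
Your part (a) argument for the main necessity \eqref{eq15} is essentially the paper's: both use $b(x)=\log|x|\in BMO(\omega)$, and your observation that $U^{b}_{\psi,s}=\pm U_{|\log|s(\cdot)||\psi,\,s}$ (since $\log|s(t)|$ has constant sign under $\beta\gamma>0$) followed by the necessity half of Theorem \ref{theo1} is a legitimate shortcut for the computation the paper redoes by hand with $f_\epsilon$ and $g_\epsilon$. However, your sketch of the quantitative bound $\int_0^{1/\delta}\psi(t)\,dt< C(\delta^{\gamma(n+\alpha)}-1)^{-1/p}$ does not work as described: pairing again with $b_0=\log|y|$ produces the kernel $-\log|s(t)|\,\psi(t)$, so you can only extract the log-weighted integral of $\psi$, not $\int_0^{1/\delta}\psi$ itself, and a power-type test function supported on $\{|x|\ge\delta^\gamma\}$ does not generate the factor $\delta^{\gamma(n+\alpha)}-1$, which is the $\omega$-measure of the annulus $1<|x|<\delta^{\gamma}$. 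The paper instead takes $b=f=1_{B(0,1)}$, so that $b(x)-b(s(t)x)\in\{0,\pm1\}$ isolates $\int_{\overline S(t,x)}\psi$, and integrates $\omega$ over exactly that annulus.

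The serious gap is in part (b). Your first step --- Minkowski's inequality to reduce to the fixed-$t$ norm $\bigl\|[b-b(s(t)\cdot)]f(s(t)\cdot)\bigr\|_{L^p(\omega)}$ --- is already fatal: for a general $b\in BMO(\omega)$ the function $x\mapsto b(x)-b(s(t)x)$ is unbounded (take $b(y)=\log|y-y_0|$ with $y_0\ne0$), so multiplication by it is not a bounded operator on $L^p(\omega)$, and the inner norm is infinite for suitable $f\in L^p(\omega)$. The same obstruction reappears in your treatment of the two outer terms: once you are estimating a $p$-th power on each annulus, any H\"older split forces the $f$-factor to an exponent strictly larger than $p$, and "Lemma \ref{lem1c} applied to $|f|^p$" amounts to asking for boundedness of $M_\omega$ on $L^q$ with $q\le1$, which fails. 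The paper's proof is structured precisely to avoid this: it estimates the $L^1(\omega)$-average $\frac1{\omega(B)}\int_B|U^b_{\psi,s}f|\,\omega$ over an arbitrary ball $B\ni x$, so that H\"older is applied at exponents $r<p$ and $r'$, putting the oscillation of $b$ into a John--Nirenberg bound (Lemma \ref{lem1a}) and the $f$-factor into $\bigl(M_\omega f^r(s(t)x)\bigr)^{1/r}$; it then dominates $|U^b_{\psi,s}f|$ pointwise by $M_\omega(U^b_{\psi,s}f)$ and only afterwards applies Minkowski and the $L^{p/r}$-boundedness of $M_\omega$. Note also that the paper must compare $b_{B,\omega}$ with $b_{s(t)B,\omega}$ for balls $B$ \emph{not} centered at the origin; this is where the chain of balls $a^{-i}B$ of Lemmas \ref{lem2} and \ref{lem3} enters and where the factor $|\log(1/|s(t)|)|$ is actually produced. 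Your middle-term estimate via concentric origin-centered balls captures the right heuristic but does not apply to the objects the proof needs to control.
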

\vskip12pt\noindent
{\bf Remark.} We observe that, (\ref{eq15}) implies $\int\limits_0^{1/\delta}|s(t)|^{-\frac{n+\alpha}p}\cdot\psi(t)dt<\infty$, for any $\delta>1$. But unfortunately, as is shown in \cite{fu1}, (\ref{eq15}) is not enough to imply $$\int_0^1|s(t)|^{-\frac{n+\alpha}p}\cdot\psi(t)dt<\infty.$$

\vskip12pt\noindent
To prove theorem \ref{theo3}, we need the following lemma
\begin{lemma}\label{lem4}
If $\omega\in\mathcal W_\alpha$ and has doubling property, then $\log|x|\in BMO(\omega)$.
\end{lemma}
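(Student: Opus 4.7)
The strategy is to bound the weighted oscillation $\frac{1}{\omega(B)}\int_B \abs{\log|x| - c_B}\,\omega(x)\,dx$ uniformly in $B$, using the standard fact that the $BMO(\omega)$-seminorm is comparable (up to a factor of $2$) to the infimum of such quantities over all choices of constants $c_B$. Write $B = B(x_0, r)$ and split into two cases depending on the position of $B$ relative to the origin.

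When $|x_0| \geq 2r$, every $x \in B$ satisfies $1/2 \leq |x|/|x_0| \leq 3/2$, so taking $c_B = \log|x_0|$ gives $\abs{\log|x| - c_B} \leq \log 2$ pointwise on $B$ and the oscillation is trivially bounded. When $|x_0| < 2r$, the ball is contained in $B(0, 3r)$; I pick $c_B = \log r$, perform the substitution $y = x/r$, and use absolute homogeneity $\omega(ry) = r^\alpha\omega(y)$ to extract a factor of $r^{n+\alpha}$, reducing the estimate to
\[
\int_B \abs{\log|x| - \log r}\,\omega(x)\,dx \leq r^{n+\alpha}\int_{B(0,3)} \abs{\log|y|}\,\omega(y)\,dy.
\]
The right-hand integral is finite: in polar coordinates (for $n > 1$) it equals $\left(\int_0^3 \abs{\log \rho}\,\rho^{n+\alpha-1}\,d\rho\right)\cdot\int_{S_n}\omega(\xi)\,d\sigma(\xi)$, with the obvious analogue when $n=1$, and this converges because $\alpha > -n$ and $\int_{S_n}\omega\,d\sigma < \infty$ by the definition of $\mathcal W_\alpha$.

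The key remaining step, which is the main obstacle and exactly where the doubling hypothesis enters, is to control the ratio $r^{n+\alpha}/\omega(B)$ uniformly when $|x_0| < 2r$. Since $|x_0| \leq 2r$ implies $B(0, 3r) \subseteq B(x_0, 5r)$, a bounded number of applications of doubling yields $\omega(B(0, 3r)) \leq C_\omega\,\omega(B)$; on the other hand absolute homogeneity gives $\omega(B(0, 3r)) = (3r)^{n+\alpha}\omega(B(0, 1))$, and $\omega(B(0,1))$ is a strictly positive finite constant (using $\alpha > -n$ together with the positivity of $\omega$ a.e.\ and $\int_{S_n}\omega\,d\sigma > 0$). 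Combining these bounds produces a constant depending only on $\omega$, $n$, $\alpha$ that dominates the oscillation uniformly in $B$, proving $\log|x| \in BMO(\omega)$.
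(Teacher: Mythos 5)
Your proof is correct and follows essentially the same route as the paper's: split according to whether the ball is far from or close to the origin relative to its radius, use the pointwise bound $\abs{\log|x|-\log|x_0|}\le\log 2$ in the far case, and in the near case combine absolute homogeneity with the doubling property to compare $\omega(B(0,3r))$ (equivalently $r^{n+\alpha}$) with $\omega(B)$. If anything, your handling of the near case is more careful than the paper's own write-up, which asserts the false pointwise bound $\abs{\log|z|}\le\log 3$ on $\{|z|\le 3\}$, whereas you correctly verify the finiteness of $\int_{B(0,3)}\abs{\log|y|}\,\omega(y)\,dy$ in polar coordinates using $\alpha>-n$.
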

Although lemma \ref{lem4} seems to be well-known, but we could not find any proof of it in the literature. One way to prove it is as follows.
\begin{proof} To prove $\log|x|\in BMO(\omega)$, for any $x_0\in\mathbb R^n$ and $r>0$, we must find a constant $c_{x_0,r}$, such that $\frac1{\omega\left(B(x_0,r)\right)}\int_{|x-x_0|\leq r}\left|\log|x|-c_{x_0,r}\right|\omega(x)dx$ is uniformly bounded. Since  
\[
\frac1{\omega\left(B(x_0,r)\right)}\int\limits_{|x-x_0|\leq r}\left|\log|x|-c_{x_0,r}\right|\omega(x)dx\]
\[
=\frac{r^{\alpha+n}}{\omega\left(B(x_0,r)\right)}\int\limits_{|z-r^{-1}x_0|\leq1}\left|\log|z|-\log r-c_{x_0,r}\right|\omega(z)dz
\]
\[
=\frac1{\omega\left(B(r^{-1}x_0,1)\right)}\int\limits_{|z-r^{-1}x_0|\leq1}\left|\log|z|-\log r-c_{x_0,r}\right|\omega(z)dz,
\]
we may take $c_{x_0,r}=c_{r^{-1}x_0,1}+\log r$, and so things reduce to the case that $r=1$ and $x_0$ is arbitrary. Let 
\[
A_{x_0}=\frac1{\omega\left(B(x_0,1)\right)}\int\limits_{|z-x_0|\leq1}\left|\log|z|-c_{x_0,1}\right|\omega(z)dz.
\]
If $|x_0|\leq2$, we take $c_{x_0,1}=0$, and observe that
\[
A_{x_0}\leq \frac1{\omega\left(B(x_0,1)\right)}\int\limits_{|z|\leq3}\log3\cdot\omega(z)dz= \log3\cdot\frac{\omega\left(B(0,3)\right)}{\omega\left(B(x_0,1)\right)}
\]
\[
\leq \log3\cdot \frac{\omega\left(B(x_0,6)\right)}{\omega\left(B(x_0,1)\right)}\leq C<\infty,
\]
where the last inequality comes from the assumption that $\omega$ has doubling property.
\vskip12pt
If $|x_0|\geq2$, take $c_{x_0,1}=\log |x_0|$. In this case, notice that
\[
A_{x_0}=  \frac1{\omega\left(B(x_0,1)\right)}\int\limits_{B(x_0,1)}\left|\log\frac{|z|}{|x_0|}\right|\omega(z)dz
\]
\[
\leq \frac1{\omega\left(B(x_0,1)\right)}\int\limits_{B(x_0,1)}\max\left\{\log\frac{|x_0|+1}{|x_0|}\,,\,\log\frac{|x_0|}{|x_0|-1}\right\}\cdot\omega(z)dz
\]
\[
\leq \max\left\{\log\frac{|x_0|+1}{|x_0|}\,,\,\log\frac{|x_0|}{|x_0|-1}\right\}\leq\log2.
\]
Thus $\log|x|$ is in $BMO(\omega)$.
\end{proof} 
\vskip12pt\noindent
{\it Proof of theorem \ref{theo3}}
{(a)} To complete the proof of (a) we need to consider two cases when $\beta,\gamma$ are both positive or both negative. Let us consider the first case when $\beta,\gamma>0$. We assume that $\|U^b_{\psi,s}\|_{L^p(\omega)\to L^p(\omega)}<\infty$ for any $b\in BMO$. Set $b(x)=\log |x|\in BMO(\omega)$ (by lemma \ref{lem4}). For any $0<\epsilon<1$, take $f_\epsilon$ as in (\ref{th1eqb}), then
\[
U^b_{\psi,s}f_\epsilon(x)=-|x|^{-\frac{n+\alpha}p-\epsilon}\int_{S(t,x)}|s(t)|^{-\frac{n+\alpha}p-\epsilon}\log|s(t)|\cdot\psi(t)dt,
\]
here $S(t,x)=\{t\in[0,1]\,|\;\text{so that}\;|s(t)\cdot x|>1\}$. Putting $\delta=\epsilon^{-1}>1$, note that $\log|s(t)|$ does not change sign almost everywhere on $[0,1]$, so
\[
\|U^b_{\psi,s}f_\epsilon\|^p_{L^p(\omega)}=\int_{\mathbb R^n}|x|^{-n-\alpha-p\epsilon}\omega(x)\left(\int_{S(t,x)}|s(t)|^{-\frac{n+\alpha}p-\epsilon}\cdot\log\frac1{|s(t)|}\psi(t)dt\right)^pdx
\]
\[
\geq\int_{\mathbb R^n}|x|^{-n-\alpha-p\epsilon}\omega(x)\left(\int_{\min\{1/|x|^{1/\beta},1\}}^1|s(t)|^{-\frac{n+\alpha}p-\epsilon}\cdot\log\frac1{|s(t)|}\psi(t)dt\right)^pdx
\]
\[
\geq\left(\int_{|x|> \delta^\beta}|x|^{-(n+\alpha+p\epsilon)}\omega(x)dx\right)\cdot\left(\int_{1/\delta}^1|s(t)|^{-\frac{n+\alpha}p-\epsilon}\cdot\log\frac1{|s(t)|}\cdot\psi(t)dt\right)^p
\]
\[
=\left(\int_{|y|>1}|y|^{-(n+\alpha+p\epsilon)}\omega(y)dy\right)\cdot\left(\frac1{\delta^\epsilon}\int_{1/\delta}^1|s(t)|^{-\frac{n+\alpha}p-\epsilon}\cdot\log\frac1{|s(t)|}\cdot\psi(t)dt\right)
\]
\[
=\|f_\epsilon\|^p_{L^p(\omega)}\cdot \left(\epsilon^\epsilon\int_{1/\delta}^1|s(t)|^{-\frac{n+\alpha}p-\epsilon}\cdot\log\frac1{|s(t)|}\cdot\psi(t)dt\right)^p.
\]
As in the proof of theorem \ref{theo1}, $\|f_\epsilon\|^p_{L^p(\omega)}>0$, so 
\[
\epsilon^\epsilon\int_{1/\delta}^1|s(t)|^{-\frac{n+\alpha}p-\epsilon}\cdot\log\frac1{|s(t)|}\cdot\psi(t)dt\leq \|U^b_{\psi,s}\|_{L^p(\omega)\to L^p(\omega)}<\infty.
\]
Letting $\epsilon\to0^+$, then 
\[
\int_{0}^1|s(t)|^{-\frac{n+\alpha}p}\cdot\log\frac1{|s(t)|}\cdot\psi(t)dt<\infty.
\]
Now we set $b(x)=f(x)=1_{B(0,1)}(x)$, the characteristic function of the ball $B(0,1)$. Note that  $f\in L^p(\omega)$, since $\alpha>-n$ and $b\in BMO(\mathbb R^n)$. So we have
\[
U^b_{\psi,s}g_\epsilon(x)=\int_0^1\left(b(x)-b\left(s(t)\cdot x\right)\right)f\left(s(t)\cdot x\right)\psi(t)dt
\]
\[
=\left(1_{B(0,1)}(x)-1\right)\cdot\int_{\overline{S}(t,x)} \psi(t)dt,
\]
where $\overline{S}(t,x)=\{t\in[0,1]:\;|s(t)\cdot x|<1\}$. Since  $|s(t)|\leq t^\gamma$ for a.e. on $[0,1]$, we can find a zero measurable $E\subset [0,1]$ so that  $\overline{S}(t,x)\supset \{t\in[0,1]:\;t<\frac1{|x|^{1/\gamma}}\}\setminus E$. For any $\delta>1$, we obtain
\[
\|U^b_{\psi,s}f\|^p_{L^p(\omega)}\geq\int_{1<|x|<\delta^\gamma}\omega(x)\cdot\left( \int_0^{1/|x|^{1/\gamma}}\psi(t)dt\right)^pdx
\]
\[
\geq\left(\int_{1<|x|< \delta^\gamma} \omega(x)dx\right)\cdot \left(\int_0^{1/\delta}\psi(t)dt\right)^p
\]
\[
=\|f\|^p_{L^p(\omega)}\cdot\left(\delta^{\gamma(\alpha+n)}-1\right)\cdot\left(\int_0^{1/\delta}\psi(t)dt\right)^p.
\]
Therefore, for any $\delta>1$ we get
\[
\int_0^{1/\delta}\psi(t)dt\leq \frac C{\left(\delta^{\gamma(\alpha+n)}-1\right)^{1/p}}<\infty,
\]
where $C=\|U^b_{\psi,s}\|_{L^p(\omega)}$. 
\vskip12pt
The left case when $\beta,\gamma<0$, can be proved in the same manner as shown, with $g_\epsilon$ used in place of $f_\epsilon$, where
\[
g_\epsilon(x)=
\begin{cases}
0\qquad\qquad\qquad\text{if}\;|x|\geq1,&\\
|x|^{-\frac{n+\alpha}p+\epsilon}\,\qquad\text{if}\;|x|<1.
\end{cases}
\]
\vskip12pt
(b) We assume that $|s(t)|=t^\gamma$, $\omega$ has doubling property, and (\ref{eq15}) holds. Let $b$ be any function in $BMO(\omega)$. We shall show that, $U_{\psi,s}$ can be determined as a bound operator on $L^p(\omega)$. For any ball $B$ of $\mathbb R^n$, $x\in B$, we have
\begin{align*}
&\frac1{\omega(B)}\int_B\left|U^b_{\psi,s}f(y)\right|\omega(y)dy\\
&\leq\frac1{\omega(B)}\int_B\int_0^1\left|\left(b(y)-b\left(s(t)\cdot y\right)\right)f\left(s(t)\cdot y\right)\right|\psi(t)\omega(y)dtdy\\
&=\frac1{\omega(B)}\int_0^1\int_B\left|\left(b(y)-b\left(s(t)\cdot y\right)\right)f\left(s(t)\cdot y\right)\right|\omega(y)dy\psi(t)dt\\
&\leq \frac1{\omega(B)}\int_0^1\int_B\left|\left(b(y)-b_{B,\omega}\right)f\left(s(t)\cdot y\right)\right|\omega(y)dy\psi(t)dt\\ 
&\quad+\frac1{\omega(B)}\int_0^1\int_B\left|\left(b_{B,\omega}-b_{s(t)B,\omega}\right)f\left(s(t)\cdot y\right)\right|\omega(y)dy\psi(t)dt\\
&\quad+\frac1{\omega(B)}\int_0^1\int_B\left|\left(b(s(t)y)-b_{s(t)B,\omega}\right)f\left(s(t)\cdot y\right)\right|\omega(y)dy\psi(t)dt\\
&=:I_1+I_2+I_3.
\end{align*}
Now take any $1<r<p$. 
\vskip12pt 
{\it Estimate for $I_1$:} By H\"{o}lder's inequality $(1/r+1/r^\prime=1$), we deduce
\[
I_1\leq\int_0^1\left(\frac1{\omega(B)}\int_B\left|f\left(s(t)y\right)\right|^r\omega(y)dy\right)^{1/r}\cdot\left(\frac1{\omega(B)}\int_B\left|b(y)-b_{B,\omega}\right|^{r^\prime}\omega(y)dy\right)^{1/r^\prime}\psi(t)dt.
\]
\[
\leq C_\omega\cdot \|b\|_{BMO(\omega)}\cdot \int_0^1\left(\frac1{\omega(s(t)B)}\int_{s(t)B}|f(y)|^r\omega(y)dy\right)^{1/r}\psi(t)dt
\]
\[
\leq C_\omega\cdot\|b\|_{BMO(\omega)}\cdot\int_0^1\left(M_\omega f^r\left(s(t)x\right)\right)^{1/r}\psi(t)dt,
\]
where $C_\omega$ is a constant, which depends only on $\omega$.
\vskip12pt 
{\it Estimate for $I_3$:} Applying H\"{o}lder's inequality again, we thus obtain
\[
I_3\leq\int_0^1\left(\frac1{\omega(B)}\int_B\left|f\left(s(t)y\right)\right|^r\omega(y)dy\right)^{1/r}\cdot\left(\frac1{\omega(B)}\int_B\left|b\left(s(t)y\right)-b_{s(t)B,\omega}\right|^{r^\prime}\omega(y)dy\right)^{1/r^\prime}\psi(t)dt.
\]
\[
=\int_0^1\left(\frac1{\omega(s(t)B)}\int_{s(t)B}|f(y)|^r\omega(y)dy\right)^{1/r}\cdot\left(\frac1{\omega(s(t)B)}\int_{s(t)B}\left|b\left(y\right)-b_{s(t)B,\omega}\right|^{r^\prime}\omega(y)dy\right)^{1/r^\prime}\psi(t)dt
\]
\[
\leq C_\omega\cdot\|b\|_{BMO(\omega)}\cdot\int_0^1\left(M_\omega f^r\left(s(t)x\right)\right)^{1/r}\psi(t)dt.
\]
\vskip12pt 
{\it Estimate for $I_2$:} We need the following lemmas. The first one generalizes a lemma of Torre and Torrea (see lemma 1.10 in \cite{torre}).
\begin{lemma}\label{lem2}
Let $\omega$ be a doubling weight function. Then, there exists a positve constant $C$ so that for any balls $B_1=B(x_1,r_1)$, $B_2=B(x_2,r_2)$, whose intersection is not empty,  and $\frac12r_2\leq r_1\leq 2r_2$, then $\omega(B)\leq C\omega(B_i)$, $i=1,2$. Here, $B$ is the smallest ball, which contains both $B_1$ and $B_2$. Moreover, for each function $b\in BMO(\omega)$, we have 
\[
\left|b_{B_1,\omega}-b_{B_2,\omega}\right|\leq 2C\|b\|_{BMO(\omega)}.
\]
\end{lemma}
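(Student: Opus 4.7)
The plan is to exploit the doubling property of $\omega$ twice: first geometrically, to show that $\omega(B)$ is comparable to $\omega(B_1)$ and $\omega(B_2)$, and then to convert that comparability into the desired estimate on the $\omega$-weighted means of $b$.

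First I would pin down the size of $B$. Since $B_1 \cap B_2 \neq \emptyset$, the triangle inequality gives $|x_1-x_2| \leq r_1+r_2$, and the hypothesis $\tfrac12 r_2 \leq r_1 \leq 2r_2$ then yields $|x_1-x_2| \leq 3r_i$ for $i=1,2$. Consequently every point of $B_1 \cup B_2$ lies within $r_i + |x_1-x_2| + \max(r_1,r_2) \leq 5r_i$ of $x_i$, so the smallest enclosing ball $B$ satisfies $B \subset B(x_i, 8r_i) = B(x_i, 2^3 r_i)$ for $i=1,2$. Iterating the doubling condition three times gives $\omega(B) \leq \omega(B(x_i, 2^3 r_i)) \leq A^3\, \omega(B_i)$, which is the first assertion with $C = A^3$.

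For the second assertion I would write, using $B_1 \subset B$,
\[
|b_{B_1,\omega} - b_{B,\omega}| = \left|\frac{1}{\omega(B_1)}\int_{B_1} \bigl(b(y)-b_{B,\omega}\bigr)\,\omega(y)\,dy\right| \leq \frac{1}{\omega(B_1)}\int_{B} |b-b_{B,\omega}|\,\omega \leq \frac{\omega(B)}{\omega(B_1)}\,\|b\|_{BMO(\omega)} \leq C\,\|b\|_{BMO(\omega)},
\]
where the last inequality uses the first part of the lemma. The identical computation with $B_2$ in place of $B_1$ bounds $|b_{B_2,\omega}-b_{B,\omega}|$ by the same quantity, and the triangle inequality delivers $|b_{B_1,\omega}-b_{B_2,\omega}| \leq 2C\,\|b\|_{BMO(\omega)}$.

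The only real obstacle is bookkeeping the geometric constants so that a fixed number of doublings suffices uniformly in $x_1,x_2,r_1,r_2$; the comparability $r_2/2 \leq r_1 \leq 2r_2$ is exactly what makes this uniform, and once $B$ is trapped inside $B(x_i, 2^3 r_i)$ the rest is routine.
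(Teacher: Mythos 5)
Your proof is correct and follows essentially the same route as the paper's: trap $B$ inside a fixed dilate of each $B_i$ so that finitely many applications of the doubling condition give $\omega(B)\leq C\omega(B_i)$, then pass through $b_{B,\omega}$ via the triangle inequality and bound each of $|b_{B_i,\omega}-b_{B,\omega}|$ by $\frac{\omega(B)}{\omega(B_i)}\|b\|_{BMO(\omega)}$. (The paper dilates about a common point $x\in B_1\cap B_2$ rather than about the centers $x_i$, and your intermediate constants are slightly loose -- e.g. $r_i+|x_1-x_2|+\max(r_1,r_2)\leq 6r_i$ rather than $5r_i$, and placing the smallest enclosing ball inside $B(x_i,8r_i)$ needs a word about where its center lies -- but only the existence of some fixed $C$ matters, so these do not affect correctness.)
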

\begin{proof}Since $\omega$ has doubling property, there exists a constant $C_1$, so that $\omega\left(B(x,2r)\right)\leq C_1\omega\left(B(x,r)\right)$ for any $x\in\mathbb R^n$ and $r>0$. Without lost of generality, we assume $r_2\leq r_1\leq 2r_2$. Let $B_1=B(x_1,r_1)$, $B_2=B(x_2,r_2)$ be two balls, whose intersection is not empty and $r_1\geq r_2$. Take $x\in B_1\cap B_2$. Then,
\[
\omega(B)\leq \omega\left(B(x,2r_1\right)\leq C_1\omega\left(B(x,r_1)\right)\leq C_1\omega\left(B(x_1,2r_1)\right)\leq C_1^2\omega\left(B_1\right),
\]
and
\[
\omega(B)\leq \omega\left(B(x,4r_2\right)\leq C_1^2 \omega\left(B(x,r_2)\right)\leq C_1^3\omega\left(B(x_2,r_2)\right).
\]
Thus we can choose the constant $C=\max\{C_1^2,C_1^3\}$.
\vskip12pt
It is clear that 
\[
\left|b_{B_1,\omega}-b_{B_2,\omega}\right|\leq \left|b_{B_1,\omega}-b_{B,\omega}\right|+\left|b_{B,\omega}-b_{B_2,\omega}\right|.
\]
Now
\[
\left|b_{B,\omega}-b_{B_1,\omega}\right|=\left|b_{B,\omega}-\frac1{\omega(B_1)}\int_{B_1}b(y)\omega(y)dy\right|
\]
\[
\leq\frac1{\omega(B_1)}\int_{B_1}\left|b(y)-b_{B,\omega}\right|\omega(y)dy\leq\frac C{\omega(B)}\int_B\left|b(y)-b_{B,\omega}\right|\omega(y)dy\leq C\|b\|_{BMO(\omega)}.
\]
The left term is estimated in a similar way.
\end{proof}
\begin{lemma}\label{lem3} Let $B=B(x_0,r)$ and $\gamma\neq0$. There exists $a\in(1,2]$, such that
\begin{itemize}
\item[{(a)}] $a^{-i}B\cap a^{-(i+1)}B\neq\emptyset$ for any $i\in\mathbb N\cup\{0\}$, \vspace*{8pt}
\item[{(b)}] if $t\in\left[a^{-(i+1)/\gamma},a^{-i/\gamma}\right]$ then $a^{-(i+1)}B\cap t^\gamma B\neq\emptyset$. Here $\left[a^{-(i+1)/\gamma},a^{-i/\gamma}\right]$ is the set of all real numbers between $a^{-(i+1)/\gamma}$ and $a^{-i/\gamma}$.
\end{itemize}
\end{lemma}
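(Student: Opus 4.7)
The plan is to reduce both assertions to a single geometric observation: for any $c>0$, the dilate $cB$ equals the ball $B(cx_0,cr)$, so two positive dilates $c_1B$ and $c_2B$ have a common point precisely when the distance between their centers does not exceed the sum of their radii, i.e.\ when
\[
|c_1-c_2|\cdot|x_0|\leq(c_1+c_2)r.
\]

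I would then split on whether the origin belongs to $\overline{B}$. If $|x_0|\leq r$, then $0\in\overline{cB}$ for every $c>0$, so every pair of dilates meets and the conclusion is trivial for any $a\in(1,2]$. If $|x_0|>r$, set $K:=(|x_0|+r)/(|x_0|-r)>1$ and choose any $a\in(1,\min\{2,K\}]$ (shrinking $a$ slightly if strict inequality is needed in the intersection criterion). For (a), take $c_1=a^{-i}$ and $c_2=a^{-(i+1)}$: after factoring out $a^{-(i+1)}$ the intersection condition becomes $(a-1)|x_0|\leq(a+1)r$, which is equivalent to $a\leq K$ and therefore holds by our choice. Crucially, this bound does not depend on $i$, so the same $a$ works for all $i\in\mathbb{N}\cup\{0\}$.

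For (b), observe that $t\mapsto t^{\gamma}$ is monotone (increasing if $\gamma>0$, decreasing if $\gamma<0$), and in either case it maps the closed interval between $a^{-(i+1)/\gamma}$ and $a^{-i/\gamma}$ onto the interval $[a^{-(i+1)},a^{-i}]$. Hence for each admissible $t$ one may write $t^{\gamma}=s\cdot a^{-(i+1)}$ for some $s\in[1,a]$. Applying the intersection criterion with $c_1=t^{\gamma}$ and $c_2=a^{-(i+1)}$ reduces the problem to $(s-1)|x_0|\leq(s+1)r$; since the expression $(s-1)/(s+1)$ is increasing in $s$ and $s\leq a\leq K$, this follows from the (a)-inequality already verified.

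The only point to watch carefully is the sign-of-$\gamma$ bookkeeping in (b): one needs to interpret $[a^{-(i+1)/\gamma},a^{-i/\gamma}]$ as the closed interval between its two endpoints (in keeping with the convention announced in the statement) and to check that $t^{\gamma}$ indeed sweeps through the whole interval $[a^{-(i+1)},a^{-i}]$ regardless of the sign of $\gamma$. Apart from this minor case distinction, the entire argument is a uniform-in-$i$ geometric calculation, which is precisely why a single choice of $a$ suffices.
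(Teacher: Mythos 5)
Your proof is correct and follows essentially the same route as the paper's: both reduce parts (a) and (b) to the criterion that two positive dilates $c_1B$ and $c_2B$ meet when $|c_1-c_2|\,|x_0|\leq (c_1+c_2)r$, and then observe that the resulting scalar inequalities hold uniformly in $i$ once $a$ is taken close enough to $1$ (you make the admissible threshold $K=(|x_0|+r)/(|x_0|-r)$ explicit, where the paper simply lets $a\to 1^+$).
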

\begin{proof}Let us consider the case when $\gamma>0$, the left case when $\gamma<0$ is similar. We need to choose $a\in(1,2]$ so that $\left(a^{-i}+a^{-i-1}\right)r\geq |x_0|\left(a^{-i}-a^{-i-1}\right)$ and $\left(t^\gamma+a^{-i-1}\right)r\geq |x_0|\left(t^\gamma-a^{-i-1}\right)$ for any $t\in\left[a^{-(i+1)/\gamma},a^{-i/\gamma}\right]$. These are equivalent to
\[
\begin{cases}
(a+1)\geq \frac{|x_0|}r(a-1)\vspace*{8pt}&\\
t^\gamma\left(1-\frac{|x_0|}r\right)+a^{-i-1}\left(1+\frac{|x_0|}r\right)\geq0\quad\text{for any }\;t\in\left[a^{-(i+1)/\gamma},a^{-i/\gamma}\right].
\end{cases}
\]
But this is easy to choose by letting $a\to 1^+$.

\end{proof}
Now we turn to estimate $I_2$. Let $a$ be as in lemma \ref{lem3}. H\"{o}lder's inequality implies
\[
I_2=\frac1{\omega(B)}\int_0^1\int_B\left|\left(b_{B,\omega}-b_{s(t)B,\omega}\right)f\left(s(t)\cdot y\right)\right|\omega(y)dy\psi(t)dt
\]
\[
=\int_0^1\left(\frac1{\omega(B)}\int_B\left|f\left(s(t)\cdot y\right)\right|\omega(y)dy\right)\left|b_{B,\omega}-b_{s(t)B,\omega}\right|\psi(t)dt
\]
\[
\leq\int_0^1\left(M_\omega f^r(s(t)x)\right)^{1/r}\left|b_{B,\omega}-b_{s(t)B,\omega}\right|\psi(t)dt
\]
\[
=\sum_{k=0}^\infty\int_{a^{-(k+1)/\gamma}}^{a^{-k/\gamma}}\left(M_\omega f^r(s(t)x)\right)^{1/r}\left|b_{B,\omega}-b_{s(t)B,\omega}\right|\psi(t)dt
\]
\[
\leq\sum_{k=0}^\infty\int_{a^{-(k+1)/\gamma}}^{a^{- k/\gamma}}\left(M_\omega f^r(s(t)x)\right)^{1/r}\left(\sum_{i=0}^k\left|b_{a^{-(i+1)}B,\omega}-b_{a^{- i}B,\omega}\right|+\left|b_{a^{-(k+1)}B,\omega}-b_{s(t)B,\omega}\right|\right)\psi(t)dt.
\]

Applying lemma \ref{lem2} and lemma \ref{lem3}, we have $\left|b_{a^{-(i+1)}B,\omega}-b_{a^{- i}B,\omega}\right|\leq C\|b\|_{BMO(\omega)}$, and $\left|b_{a^{-(k+1)}B,\omega}-b_{s(t)B,\omega}\right|\leq C\|b\|_{BMO(\omega)}$. Also, from $t\in[a^{-(k+1)/\gamma},a^{-k/\gamma}]$, it implies that $k\leq C\log a^{k}\leq C\left|\log\frac1{ |s(t)|}\right|$. From these, we have
\[
I_2\leq C\|b\|_{BMO(\omega)}\sum_{k=0}^\infty\int_{a^{-(k+1)/\gamma}}^{a^{- k/\gamma}}\left(M_\omega f^r(s(t)x)\right)^{1/r}\left(k+2\right)\psi(t)dt
\]
\[
 \leq C\|b\|_{BMO(\omega)}\sum_{k=0}^\infty\int_{a^{-(k+1)/\gamma}}^{a^{- k/\gamma}}\left(M_\omega f^r(s(t)x)\right)^{1/r}\left|\log |s(t)|\right|\psi(t)dt+2\int_0^1\left(M_\omega f^r(s(t)x)\right)^{1/r}\psi(t)dt
\]
\[
\leq C\|b\|_{BMO(\omega)}\int_0^1\left(M_\omega f^r(s(t)x)\right)^{1/r}\left(2+\left|\log\frac1{ |s(t)|}\right|\right)\psi(t)dt.
\]
\vskip12pt
Combining the estimates of $I_1,I_2$ and $I_3$, we obtain
\[
\frac1{\omega(B)}\int_B\left|U^b_{\psi,s}f(y)\right|\omega(y)dy\leq  C\|b\|_{BMO(\omega)}\int_0^1\left(M_\omega f^r(s(t)x)\right)^{1/r}\left(2+\left|\log\frac1{ |s(t)|}\right|\right)\psi(t)dt.
\]
\vskip12pt
Take the supremum over all $B$ such that $x\in B$, and take $L^p(\omega)$-norm of both side of the inequality above. We get
\begin{equation}\label{eq16}
\|M_\omega\left(U^b_{\psi,s}f\right)\|_{L^p(\omega)}\leq  C\|b\|_{BMO(\omega)}\left\|\int_0^1\left(M_\omega f^r(s(t)\cdot )\right)^{1/r}\left(2+\left|\log\frac1{ |s(t)|}\right|\right)\psi(t)dt\right\|_{L^p(\omega)}.
\end{equation}
The Minkowski inequality yields
\begin{align*}
&\qquad\|M_\omega\left(U^b_{\psi,s}\right)(\cdot)\|_{L^p(\omega)}\\
&\leq  C\|b\|_{BMO(\omega)}\left\|\int_0^1\left(M_\omega f^r(s(t)\cdot )\right)^{1/r}\left(2+\left|\log\frac1{ |s(t)|}\right|\right)\psi(t)dt\right\|_{L^p(\omega)}\\ 
&\leq  C\|b\|_{BMO(\omega)}\int_0^1\left(\int_{\mathbb R^n}\left(M_\omega f^r\left(s(t)x\right)\right)^{p/r}\omega(x)dx\right)^{1/p}\psi(t)\left(2+\left|\log\frac1{ |s(t)|}\right|\right)dt\\
&\leq  C\|b\|_{BMO(\omega)}\int_0^1\left(\int_{\mathbb R^n}\left(M_\omega f^r\left(x\right)\right)^{p/r}\omega(x)dx\right)^{1/p}|s(t)|^{-\frac{n+\alpha}p}\psi(t)\left(2+\left|\log\frac1{ |s(t)|}\right|\right)dt\\
&\leq  C\|b\|_{BMO(\omega)}\|f\|_{L^p(\omega)}\int_0^1|s(t)|^{-\frac{n+\alpha}p}\psi(t)\left(2+\left|\log\frac1{ |s(t)|}\right|\right)dt,\\
\end{align*}
where the last inequality is deduced by using lemma \ref{lem1c}. Since $U^b_{\psi,s}f\in L^1_{\text{loc}}(\omega)$, so by Lebesgue differential theorem $\left|U^b_{\psi,s}f(x)\right|\leq M_\omega\left(U^b_{\psi,s}f\right)(x)$ a.e. We arrive at
\[
\|U^b_{\psi,s}f\|_{L^p(\omega)} \leq  C\|b\|_{BMO(\omega)}\|f\|_{L^p(\omega)}\int_0^1|s(t)|^{-\frac{n+\alpha}p}\psi(t)\left(2+\left|\log\frac1{ |s(t)|}\right|\right)dt<\infty.
\]

This finishes the proof of part (b). $\hspace{6.4cm} \Box$

\begin{corollary}\label{theo4}
Let $\psi:[0,1]\to[0,\infty)$, $s:[0,1]\to\mathbb R$ be measurable functions, $\omega\in\mathcal W_\alpha$ has doubling property  and $1<p<\infty$. We also assume that there exists real constants $\beta,\gamma$, such that $\beta\gamma>0$ and $t^\beta\leq |s(t)|\leq t^\gamma$ for almost everywhere $t\in[0,1]$. 
\begin{itemize}
\item[{(a)}] If  $V^b_{\psi,s}$ is bounded on $L^p(\omega)$ for any $b\in BMO(\omega)$, then
\begin{equation}\label{eq17}
\begin{cases}
\int\limits_0^1|s(t)|^{n-\frac{n+\alpha}p}\cdot\left|\log\frac1{ |s(t)|}\right|\cdot\psi(t)dt<\infty,\vspace*{10pt}&\\
\int\limits_0^1|s(t)|^{n-\frac{n+\alpha}p}\cdot\psi(t)dt<\infty.&\\
\end{cases}
\end{equation}
\item[{(b)}] If $|s(t)|=t^\gamma$, and (\ref{eq17}) holds, then $V^b_{\psi,s}$ is bounded on $L^p(\omega)$ for any $b\in BMO(\omega)$ with operator norm
\[
\|V^b_{\psi,s}\|_{L^p(\omega)}\leq C_{\omega}\left(\int_0^1|s(t)|^{n-\frac{n+\alpha}p}\psi(t)\left(2+\left|\log\frac1{ |s(t)|}\right|\right)dt\right)\cdot\|b\|_{BMO(\omega)}.
\]
\end{itemize}
\end{corollary}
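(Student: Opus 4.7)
My plan is to reduce Corollary \ref{theo4} to Theorem \ref{theo3} by the same device used in the proof of Corollary \ref{coro1}. The key identity is
$$V_{\psi,s}f(x)=\int_0^1 f(s(t)x)\,|s(t)|^n\psi(t)\,dt = U_{|s(\cdot)|^n\psi,\,s}f(x),$$
which is linear in $f$ and independent of any symbol, so it passes to commutators:
$$V^b_{\psi,s}f = U^b_{|s(\cdot)|^n\psi,\,s}f$$
for every locally integrable $b$. Write $\tilde\psi(t):=|s(t)|^n\psi(t)$; this is again a nonnegative measurable function on $[0,1]$, and the hypotheses on $\omega$, $p$, $s$, $\beta$, $\gamma$ in the corollary are those of Theorem \ref{theo3} verbatim with $\tilde\psi$ in place of $\psi$.

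For part (b), assume $|s(t)|=t^\gamma$ and that (\ref{eq17}) holds. The two inequalities of (\ref{eq17}) are exactly the two conditions (\ref{eq15a}) written for $\tilde\psi$, since
$$|s(t)|^{-\frac{n+\alpha}{p}}\tilde\psi(t)=|s(t)|^{n-\frac{n+\alpha}{p}}\psi(t),$$
and similarly with the logarithmic factor. Applying Theorem \ref{theo3}(b) to $U^b_{\tilde\psi,s}=V^b_{\psi,s}$ immediately gives the stated operator-norm bound once the same substitution is performed on the right-hand side of the estimate in that theorem.

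For part (a), boundedness of $V^b_{\psi,s}$ for every $b\in BMO(\omega)$ is boundedness of $U^b_{\tilde\psi,s}$. Theorem \ref{theo3}(a) then supplies
$$\int_0^1|s(t)|^{-\frac{n+\alpha}{p}}|\log(1/|s(t)|)|\,\tilde\psi(t)\,dt<\infty,$$
which is the first line of (\ref{eq17}). For the second line I would rerun the test-function step of Theorem \ref{theo3}(a) in parallel, taking the symbol $b=1_{B(0,1)}$ and the power weight $f_\epsilon$ (or $g_\epsilon$ if $\beta,\gamma<0$) but now with $\tilde\psi$ replacing $\psi$; this produces an upper bound on $\int_0^{1/\delta}\tilde\psi(t)\,dt$ for every $\delta>1$, which, combined with the first line and the two-sided pinch $t^\beta\leq|s(t)|\leq t^\gamma$, gives the required integrability of $|s(t)|^{n-\frac{n+\alpha}{p}}\psi(t)$ over all of $[0,1]$.

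The main obstacle is precisely this last splicing step in part (a): as the remark following Theorem \ref{theo3} emphasizes, the logarithmic integrability on its own does \emph{not} imply the un-logarithmic one, so the polynomial estimate cannot be extracted as a direct corollary — it must be obtained from a separate test-function computation and then merged with the logarithmic bound through the Hardy-type comparison on $|s(t)|$. Part (b), by contrast, is a straightforward substitution since both conditions in (\ref{eq17}) are assumed from the outset.
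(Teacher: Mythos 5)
Your reduction $V^b_{\psi,s}=U^b_{\tilde\psi,s}$ with $\tilde\psi=|s(\cdot)|^{n}\psi$ is exactly the route the paper intends (the corollary is stated without proof, in parallel with the deduction of Corollary \ref{coro1} from Theorem \ref{theo1}), and it disposes of part (b) and of the first line of (\ref{eq17}) correctly: the substitution turns (\ref{eq15a}) into (\ref{eq17}) and the norm bound of Theorem \ref{theo3}(b) into the stated one.

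The gap is the second line of (\ref{eq17}), and the ``splicing'' you propose does not close it. Away from $t=1$ the two lines are equivalent anyway: for $t\le c<1$ the pinch $t^{\beta}\le|s(t)|\le t^{\gamma}$ with $\beta\gamma>0$ forces $\left|\log\frac1{|s(t)|}\right|\ge\min\{|\beta|,|\gamma|\}\log\frac1c>0$, so the first line already yields $\int_0^c|s(t)|^{n-\frac{n+\alpha}p}\psi(t)\,dt<\infty$ for every $c<1$. The only region at issue is a neighbourhood of $t=1$, where $|s(t)|$ may approach $1$ and the logarithm vanishes, and neither of your two ingredients reaches it: the logarithmic integral is blind to it for precisely the reason stressed in the paper's remark, while the auxiliary estimate $\int_0^{1/\delta}\tilde\psi(t)\,dt\le C\left(\delta^{\gamma(\alpha+n)}-1\right)^{-1/p}$ concerns only $[0,1/\delta]$ with $\delta>1$ and degenerates as $\delta\to1^{+}$, so it gives no control of $\int_c^1\psi(t)\,dt$ for any fixed $c<1$. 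Concretely, with $|s(t)|=t^{\gamma}$ and $\psi(t)=(1-t)^{-1}1_{[1/2,1]}(t)$ one has $\int_{1/2}^1\psi=\infty$ yet every estimate you have derived (the first line of (\ref{eq17}) and the $\int_0^{1/\delta}$ decay) is satisfied; so the second line cannot be a formal consequence of Theorem \ref{theo3}(a) plus the pinch. Establishing it (if it is indeed a necessary condition) requires a genuinely new test pair --- a symbol $b\in BMO(\omega)$ whose oscillation $b(x)-b(s(t)x)$ remains bounded below even when $|s(t)|$ is close to $1$, for instance an indicator of a union of annuli in the spirit of \cite{fu1} --- and neither the paper's Theorem \ref{theo3}(a) nor your proposal supplies such a computation.
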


\bibliographystyle{amsplain}

\end{document}